\documentclass[onefignum,onetabnum]{siamonline220329}

\Crefname{ALC@unique}{Line}{Lines} %

\usepackage{lipsum}
\usepackage{amsfonts}
\usepackage{graphicx}
\usepackage{epstopdf}
\usepackage{breqn}
\usepackage{pgfplots}
\pgfplotsset{grid style = {dotted}, label style={font=\LARGE}}
\usepackage{animate}
\usepgfplotslibrary{fillbetween}

\usepackage{graphicx}
\usepackage{algorithmic}
\ifpdf
  \DeclareGraphicsExtensions{.eps,.pdf,.png,.jpg}
\else
  \DeclareGraphicsExtensions{.eps}
\fi

\usepackage{enumitem}
\setlist[enumerate]{leftmargin=.5in}
\setlist[itemize]{leftmargin=.5in}

\newsiamremark{remark}{Remark}
\newsiamremark{hypothesis}{Hypothesis}
\crefname{hypothesis}{Hypothesis}{Hypotheses}
\newsiamthm{claim}{Claim}

\headers{Adaptive UQ for Hyperbolic Conservation Laws}{J. J. Harmon, S. Tokareva, A. Zlotnik, P. J. Swart}

\title{Adaptive Uncertainty Quantification for Stochastic Hyperbolic Conservation Laws\thanks{Submitted to the editors December 15, 2023.
\funding{This work was supported by the Laboratory Directed Research and Development (LDRD) program at Los Alamos National Laboratory. LA-UR 23-32498.}}}

\author{Jake J. Harmon\thanks{Applied Mathematics and Plasma Physics Group, Theoretical Division, Los Alamos National Laboratory, PO Box 1663, Los Alamos, NM 87545, USA
  (\email{harmon@lanl.gov}, \email{tokareva@lanl.gov}, \email{azlotnik@lanl.gov}, \email{swart@lanl.gov}).}
\and Svetlana Tokareva\footnotemark[2]
\and Anatoly Zlotnik\footnotemark[2]
\and Pieter J. Swart\footnotemark[2]
}
\usepackage{amsopn}

\usepackage{color, soul}
\usepackage{mathtools}
\usepackage{bbm}
\usepackage{caption}
\usepackage{subcaption}
\usepackage{float}

\pgfplotsset{plotstyle/.style={
width=0.6\columnwidth,
height=0.4\columnwidth,
scale only axis,
xminorticks=true,
xlabel style={font=\normalsize},
yminorticks=true,
ylabel style={font=\normalsize},
axis background/.style={fill=white},
xmajorgrids,
xminorgrids,
ymajorgrids,
yminorgrids,
legend style={at={(0.5,1.03)}, anchor=south, legend columns=4, legend cell align=left, align=left, draw=white!15!black}
}
}
\pgfplotsset{compat=1.16}

\newcommand{\norm}[1]{\left\lVert#1\right\rVert}

\ifpdf
\hypersetup{
  pdftitle={Adaptive Uncertainty Quantification for Stochastic Hyperbolic Conservation Laws},
  pdfauthor={J. J. Harmon, S. Tokareva, A. Zlotnik, P. J. Swart}
}
\fi
    
\begin{document}

\maketitle

\begin{abstract}
We propose a predictor-corrector adaptive method for the study of hyperbolic partial differential equations (PDEs) under uncertainty. Constructed around the framework of stochastic finite volume (SFV) methods, our approach circumvents sampling schemes or simulation ensembles while also preserving fundamental properties, in particular hyperbolicity of the resulting systems and conservation of the discrete solutions. Furthermore, we augment the existing SFV theory with \textit{a priori} convergence results for statistical quantities, in particular push-forward densities, which we demonstrate through numerical experiments. By linking refinement indicators to regions of the physical and stochastic spaces, we drive anisotropic refinements of the discretizations, introducing new degrees of freedom (DoFs) where deemed profitable. To illustrate our proposed method, we consider a series of numerical examples for non-linear hyperbolic PDEs based on Burgers' and Euler's equations.
\end{abstract}

\begin{keywords}
adaptivity,  error estimation, mesh refinement, stochastic finite volumes, uncertainty quantification 
\end{keywords}

\begin{MSCcodes}
35L60, 35L67, 65C30, 65M50, 65M60
\end{MSCcodes}

\section{Introduction}
Many physical processes that span a variety of settings from water flows in channels to astrophysics can be modeled by hyperbolic partial differential equations (PDEs). Such PDEs often represent systems of balance or conservation laws, where relevant quantities such as mass and momentum are physically conserved. The solution of initial boundary value problems (IBVPs) defined for PDEs of hyperbolic character pose significant computational and theoretical challenges due to discretization requirements and the dynamics of the underlying solution.  The formation of shocks in finite time, which may arise even from globally smooth initial conditions, is particularly problematic for numerical methods.

Given the practical significance of such problems and the inherent inexactness of modeling endeavours, quantifying uncertainty, e.g., due to initial or boundary conditions, driving forces, fluid properties, and other sources remains a vital component of simulation-based design and analysis. Accounting for this uncertainty, however, introduces significant further computational burden. For example, conventional sampling-based approaches address the stochasticity by employing lengthy campaigns of deterministic simulations and Monte Carlo (MC) integration. Such methods offer robustness to the dimension of the stochastic spaces and the stochastic regularity, but lack desirable convergence characteristics as the time-complexity of MC methods typically scales with the square root of the number of samples, and each sample may incur significant computational expense. The importance of this topic and its complexities has compelled significant research investment in the development of multi-level MC (MLMC) methods that aim to significantly improve computational performance \cite{giles2008, hoel2012, giles2015, elfverson2016, eigel2016, krumscheid2018, vanbarel2019}. Major advantages of MLMC methods include their non-intrusiveness, and as a consequence such methods enjoy dominance across many application domains.

Alternatives to MC exchange some scalability in the stochastic dimension for enhanced convergence properties. For example, stochastic Galerkin methods, which, under certain conditions such as stochastic regularity, may achieve exponential rates of convergence \cite{xiu2003}. Situations in which conditions on stochastic regularity are not satisfied may lead to poor or no convergence, depending on the severity of the Gibb's induced oscillations. Moreover, stochastic Galerkin formulations may potentially transform the original stochastic PDE to a set of coupled equations with modified properties. In particular, one risks a potential loss of hyperbolicity that leads to unphysical solutions and barriers to effective numerical implementations. From a software implementation perspective,  and in contrast to MC methods, stochastic Galerkin approaches are highly \textit{intrusive}, in that they require substantial departures from a deterministic solver in addition to more complicated numerical analysis to prove numerical stability and other properties.

Analogous collocation-based methods, which, like stochastic Galerkin approaches, rely on generalized polynomial chaos (gPCE) expansions construct an interpolatory representation of the output uncertainty based on deterministic samples. Sparse grids (e.g., Smolyak) help alleviate the computational burden with respect to increasing stochastic dimensionality \cite{nobile2008, babuska2010, narayan2014, teckentrup2015, barajas2016, jahnke2022}; however, strong sensitivities to the smoothness of the map from the stochastic space to goal functionals or output quantities remain. In the presence of non-smoothness in the stochastic space, global oscillations plague output quantities \cite{barth2013}, inhibiting the practical value for typical stochastic hyperbolic PDEs whether computing low-order statistical moments or push-forward probability densities.

Setting aside for the moment the dichotomy of intrusive or non-intrusive formulations, gPCE-based schemes fundamentally consider a parametric formulation of the uncertainty problem where for proximate realizations in the corresponding stochastic variables, the relationship between the corresponding output quantities is assumed to be smooth. Hyperbolic PDE systems exhibit discontinuous solutions (shocks) that may propagate in both the physical and stochastic spaces \cite{tokareva2014, mishra2016}, which nullifies this assumption. As in the deterministic case, discontinuities in the physical and stochastic spaces may emerge in finite time from smooth initial conditions. Even with a conventional multi-element gPC ansatz, spurious oscillations in the response surface inhibit informative uncertainty quantification \cite{schlachter2020}, though when augmented with appropriate hyperbolicity and stochastic slope limiters, the resulting systems may maintain hyperbolicity and subdue Gibb's oscillations \cite{kusch2022}. While amenable to tuning of the multi-element gPC expansions, significant challenges remain for accurate and efficient capture of solutions to hyperbolic PDEs under uncertainty with these methods.

Even in deterministic settings, \textit{a priori} or manual mesh configurations may neglect vital solution characteristics, lead to poor convergence rates, or exceed computational resource constraints. Addressing these limitations in the deterministic setting using adaptive methods---in which computational resources are allocated by the simulator as the PDE evolves---can result in exponential convergence for certain classes of problems even under conditions of poor solution regularity \cite{babuska1981, gui1986I, gui1986II, gui1986III}. In particular, adaptive mesh refinement can substantially improve computational efficiency in numerous applications from fluid dynamics, even in the presence of shocked flows \cite{berger1984, berger1984b, davis2016, deiterding2016, berger2021}, to fracture propagation in solid media \cite{artina2015, heister2015, lee2016}. Adaptive methods, and the underlying error indication procedures, see significant and continuing research investment. In particular,  \textit{a posteriori} error estimation and adaptive control approaches yield effective simulation tools for reliable uncertainty quantification, whereas otherwise significant and unpredictable bias may result \cite{estep2009, butler2014}. Even for MC-type ensembles, complicated dependencies between random parameters and output quantities require intricate adaptive procedures to drive discretization errors below acceptable levels so as to permit high-confidence design and analysis.

Adaptive numerical methods for stochastic PDEs, on the other hand, have seen substantially less research than adaptive methods in the deterministic setting, in part due to the complexity of obtaining estimates or indications of sensitivity or error contributions and translating them to effective computational resource allocations. Notably, \cite{herty2023} proposes a multiresolution discontinuous Galerkin scheme for solving parameterized stochastic hyperbolic conservation laws based on the multiresolution analysis for deterministic problems \cite{gerhard2016}, with limitations to isotropic refinements.

The development of methods that adaptively resolve uncertainty propagation in higher dimensions in both the physical \textit{and} stochastic spaces through dimension reduction or condensation relies on precise characterization of deterministic and statistical error throughout the discretized domains.
Assuming equal dependencies among the constitutive directions will drive impractically large insertions of degrees of freedom (DoFs), especially for high-dimensional stochastic spaces. While isotropic refinements---which
neglect the directionality of approximation error---simplify implementation, the capabilities unlocked by directional, anisotropic refinement afford a built-in mechanism for reducing
such approximation error precisely. Tailoring resolution in this manner based on the local interactions between the physical and stochastic coordinates as the solution evolves in time promises significant enhancement in the simulation of stochastic PDE systems.

A recent method for non-adaptive stochastic finite volumes (SFV)-based discretization demonstrated significant potential for tractable and scalable simulation of uncertainty propagation in hyperbolic PDE systems on graphs \cite{tokareva2022}. In this study, we expand significantly on SFV methods by  incorporating techniques for automatic distribution of computational resources by adding local degrees of freedom (DoFs) throughout the physical and stochastic spaces. By adapting the discretization via a predictor-corrector system as the PDE evolves, we facilitate an equilibrated, accurate capture of behavior in the physical and stochastic spaces, including in the presence of discontinuous or shocked flows. In contrast to previous studies, we also consider the formal assembly of adaptive discretizations with anisotropic capability that maintain conservation and enhance convergence rates for approximating statistical moments and push-forward densities.

\subsection{Contributions}
\begin{enumerate}[label=(\roman*)]
    \item We propose a novel adaptivity framework underpinned by a flux {1-irregularity} property in Section \ref{sec:refinement_framework}, which in contrast to existing works, permits fully anisotropic refinements while maintaining conservation.
    \item We prove new \textit{a priori} bounds on SFV-based approximations of push-forward probability density and cumulative distribution functions; see Theorems \ref{thm:pdf_error_convergence} and \ref{thm:CDF_convergence_SFV}. We confirm these results through a numerical study of the transport equation with uncertain initial conditions.
    \item From Theorem \ref{thm:CDF_convergence_SFV}, we derive an SFV error indicator, offering a new approach to controlling approximation error in the physical and stochastic spaces under a semi-embedded structure. See Corollary \ref{cor:error_indicator}. We accompany this new error indicator with auxiliary bounds in Proposition \ref{prop:error_bound}
    \item In Algorithm \ref{alg:adaptivity}, we establish a novel predictor-corrector for identifying and controlling approximation error in SFV-type discretizations. The method permits anisotropic or directional refinements throughout the combined physical and stochastic computational domain.
    \item To quantify the performance of our method, we consider challenging benchmark problems based on the stochastic Burgers' and stochastic Euler's equations.
\end{enumerate}

\subsection{Organization}
The remainder of the paper is organized as follows. In Section \ref{sec:prelim}, we outline the stochastic PDEs under consideration and the theoretical underpinnings of our proposed approach. In Section \ref{sec:refinement_framework}, we consider the mesh framework that supports precise adaptivity in the physical and stochastic spaces. Specifically, we construct a conservative discretization framework with improved convergence and computation of approximation solutions and functionals, e.g., statistical moments, with respect to uniform refinements. In Section \ref{sec:error_analysis}, we augment the existing SFV theory with additional \textit{a priori} convergence results for probability density and cumulative distribution function approximations. As a consequence of these results, we propose an error indicator guided by an enriched-reduced reconstruction pair that, when deployed alongside a smoothness estimator, permits robust insertion of new DoFs. Finally, in Section \ref{sec:numerical_results}, we consider the stochastic Burgers' equation and the stochastic Euler equations, demonstrating that our approach delivers significant reductions in the number of DoFs required to attain the same accuracy as high-cost, non-adaptive simulations.

\section{Preliminaries}
\label{sec:prelim}
We begin with the problem of finding a (weak) solution to a hyperbolic system of conservation laws of the form
\begin{equation}\label{eq:deterministic_cons_law}
\mathbf{u}_t + \nabla\cdot \textbf{F}(\mathbf{u}) = \mathbf{0}\; \mathrm{in}\; \Omega_{\mathrm{phys}}\times\mathbb{I},   
\end{equation}
where $\Omega_{\mathrm{phys}}\subset \mathbb{R}^d$  and $\mathbb{I}$ is a finite subset of $\mathbb{R}_{>0}$ that represents the temporal region of interest for conserved variables $\mathbf{u}\in \mathbb{R}^p$ that are subject to hyperbolic fluxes $\mathbf{F}\in\mathbb{R}^{p\times d}$ given appropriate initial and boundary conditions. Subscripts $(\cdot)_t$ indicate a partial derivative with respect to time $t$, and $\nabla$ acts on the spatial coordinates alone.

Many PDEs of interest, under specification of $\mathbf{F}$, exhibit the structure of \eqref{eq:deterministic_cons_law}. Relevant examples to our study include the scalar \textit{transport} or \textit{advection} equation, with 
\begin{equation}\label{eq:advection} F_{\mathrm{adv}} = a u,\quad a\in\mathbb{R}, 
\end{equation} 
for a wave speed $a$; \textit{Burgers'} equation with 
\begin{equation}\label{eq:burgers_flux} F_{\mathrm{Burgers}} = \frac12 u^2, 
\end{equation} 
and \textit{Euler's} equation, with 
\begin{align}\label{eq:euler_flux}
    \mathbf{F}_{\mathrm{Euler}} &= \begin{bmatrix}
                                        \rho v \\
                                        \rho v^2 + p \\
                                        v(\rho E + p)
                                        \end{bmatrix},
\end{align}
where $\rho$, $v$, $p$, and $E$ represent the density, velocity, pressure, and total energy of the flow. In the case of Euler's equation, the vector of conserved variables is 
\begin{align}
    \mathbf{u} &= \begin{bmatrix}
                                        \rho \\
                                        \rho v \\
                                       \rho E
                                        \end{bmatrix},
\end{align}
where $\rho E$ depends on an auxiliary equation of state (EOS), such as the ideal gas EOS or tabulated varieties.

The fundamental challenge of seeking solutions to \eqref{eq:deterministic_cons_law} together with associated initial and boundary conditions stems from the general absence of analytical solutions except under specific, highly restrictive conditions. In this study, we thus consider \textit{weak solutions}, which satisify equation \eqref{eq:deterministic_cons_law} in some integral sense. Such weak solutions are \textit{not} unique, however. Selecting in this family of solutions the physically ``correct" or relevant one requires the concept of \textit{entropy} solutions or \textit{viscosity limiting} solutions \cite{toro2009}.

Furthermore, we extend the hyperbolic conservation law \eqref{eq:deterministic_cons_law} together with associated initial and boundary conditions to the stochastic setting.  Consider now a probability space $\left(\Omega_{\mathrm{stoch}}, \, \mathcal{F},\, \mathbb{P} \right)$, composed of a set $\Omega_{\mathrm{stoch}}$, a $\sigma$-algebra $\mathcal{F}$, and a probability measure $\mathbb{P}$ on $\mathcal{F}$.
Let $\mathbf{y} = \mathbf{y}(\omega)\in D_{\mathrm{stoch}}\subset\mathbb{R}^q$ measurable $\mathcal{F}$ denote a random variable on $\left(\Omega_{\mathrm{stoch}}, \, \mathcal{F},\, \mathbb{P} \right)$. Assume there exists for $\mathbf{y}$ a probability density $\mu$ such that $\mathbb{P}(A) = \int_A \mu(\mathbf{y})\, d\mathbf{y}$ for all $A\in \mathcal{F}$. We further assume the expectation $\mathbb{E}$ of $\mathbf{y}$ is finite, that is,
\begin{equation}
    \mathbb{E}[\mathbf{y}] = \int_\Omega \mathbf{y}\mathbb{P}(d\mathbf{y}) = \int_{\mathbb{R}^q} \mathbf{y}\mu(\mathbf{y})\, d\mathbf{y} < \infty,
\end{equation}
and for $u(\cdot;\, \mathbb{Y}(\omega))\in L^1(\Omega_{\mathrm{stoch}}; \, \mathbb{R}^q),$
\begin{equation}
    \mathbb{E}[u] = \int_{\mathbb{R}^q} u\left(\cdot; \, \mathbf{y}\right)\mu\left(\mathbf{y}\right) d\mathbf{y} < \infty,
\end{equation}
where $L^1(\Omega_{\mathrm{stoch}}; \, \mathbb{R}^q)$ denotes the associated Bochner space.

With this setting, let us generalize the deterministic conservation law \eqref{eq:deterministic_cons_law} to one with uncertainty. Consider a hyperbolic system of conservation laws dependant on events $\omega\in\Omega_{\mathrm{stoch}}$ with uncertain flux
\begin{equation}\label{eq:stochastic_cons_law}
    \mathbf{u}_t + \nabla\cdot\mathbf{F}(\mathbf{u};\, \omega) = \mathbf{0} \; \mathrm{in}\; \Omega_{\mathrm{phys}}\times\mathbb{I},\, \omega\in\Omega_{\mathrm{stoch}},
\end{equation}
subject to random initial data 
\begin{equation}\label{eq:stochastic_ini_con}
    \mathbf{u}(\mathbf{x}, 0; \omega) = \mathbf{u}_0(\mathbf{x},\omega),\quad \mathbf{x}\in \Omega_{\mathrm{phys}},\; \omega\in \Omega_{\mathrm{stoch}},
\end{equation}
and random boundary conditions
\begin{equation}\label{eq:stochastic_boundary_con}
    \mathbf{u}(\mathbf{x},t;\omega) = \mathbf{u}_B(t, \omega),\quad \mathbf{x}\in\partial\Omega_{\mathrm{phys}},\; \omega\in\Omega_{\mathrm{stoch}}.
\end{equation}
For scalar conservation laws in multiple dimensions, existence and uniqueness of random entropy solutions to the stochastic IBVP \eqref{eq:stochastic_cons_law}-\eqref{eq:stochastic_boundary_con} was proven in \cite{mishra2012}. In the interest of clarity of our exposition, we consider \eqref{eq:stochastic_cons_law} in the absence of an excitation.  The approach for solving the stochastic IBVP in a more general setting with a non-zero source term, which could be deterministic or stochastic, is largely unchanged.

Under the assumptions above, we parameterize the random inputs in the stochastic IBVP \eqref{eq:stochastic_cons_law}-\eqref{eq:stochastic_boundary_con} via the random variable $\mathbf{y} = \mathbf{y}(\omega)$, leading to a \textit{parametric} system of hyperbolic conservation laws,
\begin{equation}\label{eq:stochastic_cons_law_param}
    \mathbf{u}_t + \nabla\cdot\mathbf{F}(\mathbf{u}, \, \mathbf{y}) = 0, \; \mathbf{x}\in\Omega_{\mathrm{phys}},\, \mathbf{y}\in D_{\mathrm{stoch}},
\end{equation}
subject to
\begin{align}
 \mathbf{u}(\mathbf{x},0,\mathbf{y}) = \mathbf{u}_0(\mathbf{x},\mathbf{y}),&\quad \mathbf{x}\in \Omega_{\mathrm{phys}},\; \mathbf{y}\in D_{\mathrm{stoch}}, \label{eq:stochastic_ini_con_param} \\
    \mathbf{u}(\mathbf{x},t,\mathbf{y}) = \mathbf{u}_B(t,\mathbf{y}),&\quad \mathbf{x}\in\partial\Omega_{\mathrm{phys}},\; \mathbf{y}\in D_{\mathrm{stoch}}. \label{eq:stochastic_boundary_con_param}
\end{align}

For all $t\in \mathbb{I}$, we assume $\mathbf{u}$ is Bochner integrable. Let \begin{equation} 
\mathcal{A}(\mathbf{u}, \mathbf{v};\, \mathbf{y}) = 0, \quad \forall \mathbf{v}\in\mathbb{V}
\end{equation} 
denote the weak form of \eqref{eq:stochastic_cons_law_param}-\eqref{eq:stochastic_boundary_con_param} for a fixed $\mathbf{y}\in D_{\mathrm{stoch}}$ with respect to an appropriate test space $\mathbb{V}$.
Suppose a subdivision or triangulation of the stochastic space $D_{\mathrm{stoch}}$ by cells $T_{y}\subset D_{\mathrm{stoch}}$. To define the SFV scheme, we let 
\begin{equation}
\mathbb{E}[\mathbf{u}\,|\,\mathbf{y}\in T_y] = \frac{1}{P\left(\mathbf{y}\in T_y\right)}\int_{T_y} \mathbf{u}\mu(\mathbf{y})\,d\mathbf{y}, 
\end{equation}
and then consider finding a weak solution $\mathbf{u}$ such that
\begin{equation}\label{eq:general_SFV}
    \mathbb{E}[\mathcal{A}(\mathbf{u}, \mathbf{v};\, \mathbf{y})\, | \, \mathbf{y}\in T_y] = 0, \,\forall \mathbf{v}\in\mathbb{V},
\end{equation}
for each $T_y$.

\begin{remark}\label{remark:unbounded}
If $D_{\mathrm{stoch}}$ is an unbounded subset of $\mathbb{R}^q$, as expected for, e.g., Gaussian random variables, we first partition $D_{\mathrm{stoch}}$ such that $$ D_{\mathrm{stoch}} = D_{\mathrm{stoch}}^{\mathrm{bounded}}\cup D_{\mathrm{stoch}}^{\mathrm{unbounded}},\;D_{\mathrm{stoch}}^{\mathrm{bounded}}\cap D_{\mathrm{stoch}}^{\mathrm{unbounded}}=\emptyset,  $$
and $\mathbb{P}(\mathbf{y}\in D_{\mathrm{stoch}}^{\mathrm{unbounded}}) \le \epsilon.$ The above procedure is then applied to the bounded subset $D_{\mathrm{stoch}}^{\mathrm{bounded}},$ neglecting the error induced by truncating the stochastic space. Terminating the partition at boundaries of the domain with infinite cells of finite measure is also feasible.
\end{remark}

Such an outer scheme is independent of the discretization in the physical space and may augment continuous or discontinuous Galerkin finite element methods, finite volume methods, and so on \cite{tokareva2013}. Specifying, for example, \eqref{eq:general_SFV} with a full finite volume approximation, i.e., in both the physical and stochastic spaces, with the physical space subdivided into cells $T_x$ results in the following scheme:
\begin{equation}\label{eq:sfv_fv_scheme}
    \frac{1}{P\left(\mathbf{y}\in T_y\right)}\left( \int_{T_x} \int_{T_y} \mathbf{u}_t\mu(\mathbf{y})\,d\mathbf{x}d\mathbf{y} + \int_{T_x} \int_{T_y} \nabla\cdot\mathbf{F}(\mathbf{u}, \mathbf{y})\mu(\mathbf{y})\, d\mathbf{x}d\mathbf{y} \right) = 0.
\end{equation}

Let $T \coloneqq T_x\times T_y$ denote the control volume over a subset of $\Omega_{\mathrm{phys}}\times D_{\mathrm{stoch}}$. For each such volume, let $h_T$ denote its measure, i.e.,
        \begin{equation}
            h_T = |T_x||T_y| = \int_{T_x}\int_{T_y} \mu(\mathbf{y})\,d\mathbf{x}d\mathbf{y},
        \end{equation}
        where $|T_x|$ and $|T_y|$ represent the physical and stochastic volumes of cell $T$, respectively.
        
        In the conventional manner, introducing the cell average
        \begin{equation}
        {\mathbf{U}}_T = \frac{1}{h_T}\int_{T_x}\int_{T_y} \mathbf{u}(\mathbf{x},\,t,\,\mathbf{y})\mu(\mathbf{y})\,d\mathbf{x}d\mathbf{y}
        \end{equation}
        in the SFV scheme \eqref{eq:sfv_fv_scheme} leads after partial integration over $T_x$ to the ODE system
        \begin{equation}\label{eq:sfv_ODE}
            \frac{d{\mathbf{U}}_T}{dt} + \frac{1}{h_T}\int_{\partial T_x} \int_{T_y} \left( \mathbf{F}(\mathbf{u}, \mathbf{y})\cdot\hat{\mathbf{n}}\right) \mu(\mathbf{y})\, d\mathbf{x}d\mathbf{y} = 0.
        \end{equation} 
        Here, and elsewhere in the study, variations on $\mathbf{U}$ refer to DoFs, and  particularly cell averages, while $\mathbf{u}$ refers explicitly to point values.
        
        Note that \eqref{eq:sfv_ODE} is still exact, describing instead the evolution of the cell averages. However, in direct symmetry with the conventional deterministic problem, we replace the potentially discontinuous flux through the cell boundary, $\mathbf{F}(\mathbf{u}, \mathbf{y})\cdot\hat{\mathbf{n}}$, by any suitable numerical flux approximation, such as the Lax-Friedrichs or Rusanov flux \cite{toro2009}. With such a substitution, applying a suitable time integration scheme such as a Runge-Kutta method solves the ordinary differential equation system of \eqref{eq:sfv_ODE}. For spatially discretized PDEs of hyperbolic character, typical choices include the strong stability preserving (SSP) Runge-Kutta (RK) methods \cite{gottlieb2001}. In this manuscript, we assume a sufficiently small global time-step such that the error resulting from the temporal discretization may be neglected. Multi-rate or local time stepping schemes, such as in the typical deterministic framework of Berger and Oliger \cite{berger1984}, may afford additional efficiency. Combining adaptivity in the physical and stochastic spaces with temporal adaptivity is a compelling topic for future work.
        
        We now recall several key results.
        
        \begin{theorem}[(7.11), \cite{tokareva2014}]\label{thm:solution_L1}
         Let $u$ denote the exact solution to the stochastic IBVP \eqref{eq:stochastic_cons_law}-\eqref{eq:stochastic_boundary_con}, and let $u^y$ be the numerical solution that is exact in $\mathbf{x}$ and discretized in $\mathbf{y}$ of order $\ell$. Similarly, let $u^{xy}$ denote the numerical solution discretized in both $\mathbf{x}$ and $\mathbf{y}$ of order $r$ and $\ell$, respectively. Assume the following holds:
         \begin{align}
             \norm{u^y - u^{xy}}_{L^1} &\le C_1 |T_x|^{r}, \quad \forall \mathbf{y}\in D_{\mathrm{stoch}},\\
             \norm{u -u^y}_{L^1} &\le C_2|T_y|^{\ell}, \quad \forall \mathbf{x}\in\Omega_{\mathrm{phys}}.
         \end{align}
         Then, we have that
         \begin{equation}
             \norm{u-u^{xy}}_{L^1} \le C_1|T_x|^{r} + C_2|T_y|^{\ell}.
         \end{equation}
        \end{theorem}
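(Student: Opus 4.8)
The plan is to recognize the claimed estimate as a direct application of the triangle inequality for the $L^1$ norm, with the intermediate solution $u^y$ (exact in $\mathbf{x}$, discretized in $\mathbf{y}$) serving as the bridge between the exact solution $u$ and the fully discretized solution $u^{xy}$. First I would insert and subtract $u^y$ inside the norm, writing $u - u^{xy} = (u - u^y) + (u^y - u^{xy})$, and invoke subadditivity of $\norm{\cdot}_{L^1}$ to obtain $\norm{u - u^{xy}}_{L^1} \le \norm{u - u^y}_{L^1} + \norm{u^y - u^{xy}}_{L^1}$. Substituting the two hypothesized bounds then immediately yields the right-hand side $C_1 |T_x|^{r} + C_2 |T_y|^{\ell}$, which is the claim.

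The only point requiring genuine care is reconciling the quantifiers in the two hypotheses with the domain of the target norm. The first bound is stated pointwise in $\mathbf{y}$ (uniformly over the stochastic space) and the second pointwise in $\mathbf{x}$, whereas the conclusion is an $L^1$ norm over the full product $\Omega_{\mathrm{phys}}\times D_{\mathrm{stoch}}$ weighted by the measure $\mu(\mathbf{y})\,d\mathbf{x}\,d\mathbf{y}$. I would therefore make explicit that each hypothesized estimate lifts to a bound over the combined space: integrating the first estimate against $\mu$ over $D_{\mathrm{stoch}}$ and using that $\mu$ integrates to the total retained probability (unity, per the normalization in Remark \ref{remark:unbounded}) preserves the constant $C_1$, and symmetrically for the second estimate integrated over $\Omega_{\mathrm{phys}}$. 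Fubini--Tonelli justifies splitting the combined norm into iterated norms in the two variables so that the pointwise-in-$\mathbf{y}$ and pointwise-in-$\mathbf{x}$ hypotheses may each be applied inside the respective inner integral.

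The main obstacle here is bookkeeping rather than analysis: ensuring the two error contributions are measured over compatible domains so that the triangle inequality applies with the stated constants, and confirming no cross term is dropped. Since the bound is additive and both right-hand sides are already in the desired form, no further estimation—of the numerical flux, the entropy solution's regularity, or the discretization consistency—is needed at this stage; those ingredients are precisely what establish the two hypotheses \emph{a priori}, and the theorem takes them as given.
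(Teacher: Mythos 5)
Your proposal is correct and is essentially the standard argument: the paper itself does not reprove this result (it is recalled verbatim from \cite{tokareva2014}), and the proof there is exactly your triangle-inequality decomposition through the semidiscrete solution $u^y$, with the pointwise-in-$\mathbf{y}$ and pointwise-in-$\mathbf{x}$ hypotheses lifted to the product space by integration against $\mu(\mathbf{y})\,d\mathbf{x}\,d\mathbf{y}$. Your added care with the quantifiers and Fubini--Tonelli is sound bookkeeping and matches the intended reading of the hypotheses.
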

        
        \begin{theorem}[(7.17), \cite{tokareva2014}]\label{thm:expectation_L1} Let the assumptions of Theorem \ref{thm:solution_L1} hold. The approximation error of the expected value is bounded such that 
        \begin{equation}
         \norm{\mathbb{E}[u] - \mathbb{E}[u^{xy}]}_{L^1}\le C_1|T_x|^r + C_2|T_y|^{\ell}.
         \end{equation}
        \end{theorem}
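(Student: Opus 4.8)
The plan is to exploit the fact that the expectation operator is simply integration against the probability density $\mu$, and that such an averaging operation is non-expansive in $L^1$. The entire argument reduces to one application of Jensen's inequality followed by the solution-error bound already established in Theorem \ref{thm:solution_L1}.

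First I would invoke linearity of the expectation to write $\mathbb{E}[u] - \mathbb{E}[u^{xy}] = \mathbb{E}[u - u^{xy}]$, so that the quantity of interest becomes the $L^1$-norm over the physical variable $\mathbf{x}$ of an expectation taken over the stochastic variable $\mathbf{y}$, namely
\begin{equation}
\norm{\mathbb{E}[u] - \mathbb{E}[u^{xy}]}_{L^1} = \int_{\Omega_{\mathrm{phys}}} \left| \int_{D_{\mathrm{stoch}}} \left(u - u^{xy}\right)\mu(\mathbf{y})\, d\mathbf{y} \right| d\mathbf{x}.
\end{equation}

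Next I would move the absolute value inside the inner integral---this is Jensen's inequality, equivalently the triangle inequality for integrals, and uses only $\mu \ge 0$---and then apply Fubini's theorem to exchange the order of integration. This gives
\begin{equation}
\norm{\mathbb{E}[u] - \mathbb{E}[u^{xy}]}_{L^1} \le \int_{D_{\mathrm{stoch}}} \int_{\Omega_{\mathrm{phys}}} \left|u - u^{xy}\right| d\mathbf{x}\, \mu(\mathbf{y})\, d\mathbf{y} = \norm{u - u^{xy}}_{L^1},
\end{equation}
where the right-hand side is the full $L^1$-norm over the product space $\Omega_{\mathrm{phys}} \times D_{\mathrm{stoch}}$ against the measure $\mu(\mathbf{y})\,d\mathbf{x}\,d\mathbf{y}$. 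Applying Theorem \ref{thm:solution_L1} then closes the argument with the claimed bound $C_1|T_x|^r + C_2|T_y|^\ell$.

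I do not expect a genuine obstacle here, as the statement is essentially a corollary of the solution-error bound; the one point requiring care is the bookkeeping of which variable each $L^1$-norm is taken over---the left-hand norm involves only $\mathbf{x}$, whereas the bound inherited from Theorem \ref{thm:solution_L1} is over the joint space. The essential mechanism is that integrating a function against a probability density cannot amplify its $L^1$ error, so the expected value inherits exactly the same convergence rate as the underlying approximate solution.
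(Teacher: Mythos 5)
Your proof is correct. Note that the paper itself gives no proof of this statement---it is recalled verbatim as (7.17) from \cite{tokareva2014}---and your argument (linearity of $\mathbb{E}$, the triangle inequality for integrals together with Tonelli/Fubini, then the joint-space bound of Theorem \ref{thm:solution_L1}, with the norm bookkeeping you flag handled exactly as you describe) is precisely the standard derivation by which the cited reference obtains it, the Fubini step being justified by the Bochner integrability assumed in Section \ref{sec:prelim}.
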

        
        For sufficiently smooth solutions, Thm. \ref{thm:solution_L1} and Thm. \ref{thm:expectation_L1} may be stated analogously for the $L^\infty$-norm.

\section{Refinement Framework}
\label{sec:refinement_framework}
We now outline the mathematical framework for describing domain refinement in the adaptive method for approximating solutions to the stochastic IBVP \eqref{eq:stochastic_cons_law}-\eqref{eq:stochastic_boundary_con} via the parameterization \eqref{eq:stochastic_cons_law_param}. We propose a refinement framework that  emphasizes balance laws to ensure in practice that refinements in the physical and stochastic spaces do in fact conserve the theoretically conserved variables throughout the evolution of the PDE.  

Denote by $\mathbb{T}$ an admissible set of discretizations. For each $\mathcal{T}\in\mathbb{T}$ there exists $\left\{ \mathcal{T}_0,\dots,\mathcal{T}_n \right\}$ such that
        \begin{equation}
            \mathcal{T} = \bigcup_{\ell = 0}^{n-1} \mathcal{T}_{\ell},
        \end{equation}
        where each mesh $\mathcal{T}_\ell$ is composed of a non-overlapping, potentially incomplete covering of $\Omega_{\mathrm{phys}}\times D_{\mathrm{stoch}}$ via conformal control volumes $K_{\ell_i}, \, i\in\mathcal{I}_\ell,$ where $\mathcal{I}_\ell$ denotes the index set for level $\ell$.  Keeping in mind that further requirements on each mesh $\mathcal{T}_\ell$ will be detailed below, we note that while $\mathcal{T}_0$ must form a complete covering, $\mathcal{T}_\ell,\,\ell > 0$ \textit{may} form a complete covering. In other words, though $\mathcal{T}_\ell$ is conformal, the union $\mathcal{T}$ is not.

        Considering the separation of the physical and stochastic spaces, note that we have non-zero fluxes only in physical directions.  Letting $\mathcal{F}(T)$ denote the faces of $T$ and letting $\mathcal{F}_{x}(T)$ denote the faces of $T_x$, we write $x\in f_x$ for $x\in\mathbb{R}^d$ along $f_x\in\mathcal{F}_x(T)$.
        
        For each $f_x\in{F}_{x}$, let $f_y(f_x)\in\mathcal{F}(T)$ denote the associated face such that $x\in f_x$ for all $(x,\, y)\in f_y(f_x)$. We denote by $\mathcal{F}_y$ the set of all such faces, i.e.,
        \begin{equation}
        \mathcal{F}_y(T) = \left\{ f_y(f_x) \, | \, f_x\in\mathcal{F}_x(T) \right\}.
        \end{equation}
        Furthermore, let $\mathrm{refine}\left( \mathcal{T} \right)\subset\mathbb{T}$ denote the set of all admissible refinements $\mathcal{T}_*\in\mathbb{T}$ of $\mathcal{T}$.
        
        \begin{remark}
        Let $\mathcal{T}\in\mathbb{T}$ and $\mathcal{T}_*\in\mathrm{refine}(\mathcal{T})$. Then there exists $m>0$ such that $$ \mathcal{T}_* = \mathcal{T} \cup \left( \bigcup_{\ell = n}^{n+m-1} \mathcal{T}_\ell \right). $$
        \end{remark}
        
        \begin{remark}
        Let $\mathcal{T}\in\mathbb{T}$ and take $K_{\ell_i},\, 0<\ell \leq n$. Then there exists $j\in\mathcal{I}_k$, $0\leq k \leq n,\, k \neq \ell$ such that $$\mathrm{interior}\left(K_{\ell_i}\right)\cap\mathrm{interior}(K_{k_j})\neq \emptyset.$$ In other words, any refined cell must be a descendant of a cell on a previous level.
        \end{remark}
        
        Our construction of $\mathbb{T}$ implies a \textit{hierarchical} structure to the generation of new, refined discretizations. As such, we must outline a concept of \textit{activity}. Consider a forest $\mathbb{F}$ of $\mathcal{T}$. Naturally, for each $K\in\mathcal{T}$, we have $K\in\mathbb{F}$. Moreover, $\mathrm{root}(K) = K_{0_i}\in\mathcal{T}_0$ for some $i\in\mathcal{I}_0$.  
        \begin{definition}[Forest]
        A forest $\mathbb{F}$ of $\mathcal{T}$ is a disjoint union of inheritance trees for each origin cell $K_0\in\mathcal{T}_0.$ 
        \end{definition}

        \begin{definition}[Active cell]
         A cell $K_{\ell_j}$ is considered \textit{active} iff $K_{\ell_j}$ is a leaf of $\mathbb{F}$.
        \end{definition}
        
        \begin{remark}
        Suppose that $K_{\ell_i}$ is an active cell.  Then for all $K_{k_j}$, $K_{k_j}\neq K_{\ell_i}$ such that 
        \begin{equation}
            \mathrm{interior}\left(K_{\ell_i}\right)\cap\mathrm{interior}(K_{k_j})\neq \emptyset,
        \end{equation}
        if $K_{k_j}$ is inactive.
        \end{remark}
        We use $\hat{\mathcal{T}}$ to refer to the set of all active cells in $\mathcal{T}$.
        
        Given the structure of the problems described in Section \ref{sec:prelim}, we outline a final set of constraints on the relationship between each $K\in\mathcal{T}$. Note that the augmented problem \eqref{eq:sfv_ODE} features fluxes only in the physical directions, with purely parametric stochastic coordinates that are not inherent to the PDE problem. In this manner, conservation of physical quantities is maintained through the treatment of these fluxes at the interfaces $\mathcal{F}_y(K)$. To help enforce this conservation, we constrain all $\mathcal{T}\in\mathbb{T}$ to \textit{flux 1-irregularity}, which we define below.

        \begin{definition}[Flux 1-Irregularity]
        \label{def:irregularity}
         A mesh $\mathcal{T}$ is considered Flux 1-Irregular if it satisfies for all $T\in\hat{\mathcal{T}}$ and for all $f_y\in \mathcal{F}_y(T)$, that if there exists $T'\in\hat{\mathcal{T}}$ such that $f_y$ and $f'_y\in\mathcal{F}_y(T')$ intersect with non-zero measure, then $T$ and $T'$ share \textit{at least} $q$ vertices, where $q$ is the dimension of the stochastic space.
        \end{definition}
        
        \begin{figure}[]
        \centering
       
       \begin{subfigure}[b]{0.75\textwidth}
        \includegraphics[width=\textwidth]{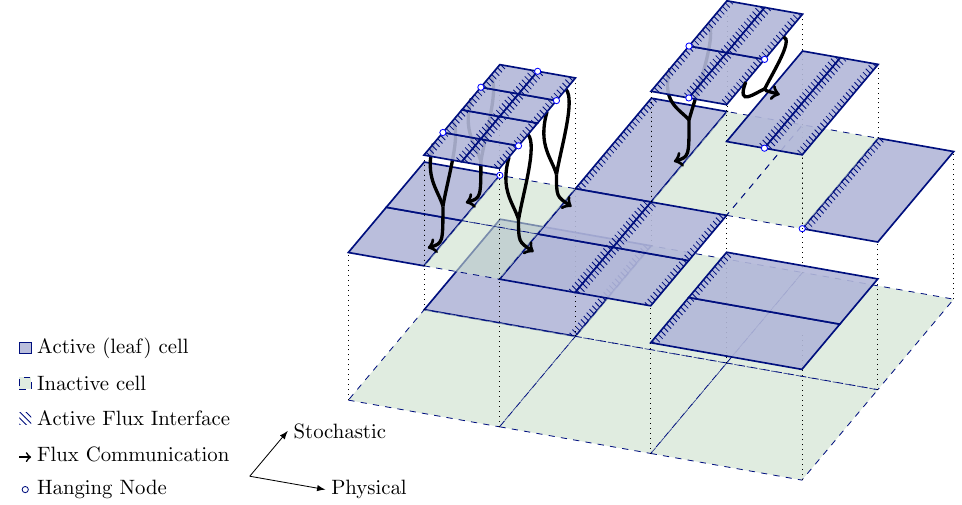}
        \caption{}
    \end{subfigure}
    \begin{subfigure}[b]{0.22\textwidth}
        \includegraphics[width=\textwidth]{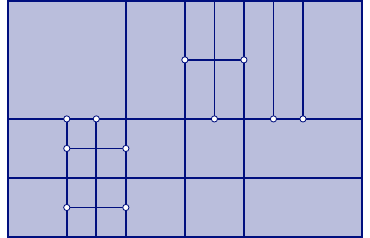}
        \caption{}
    \end{subfigure}
    
        \caption{An admissible discretization that satisfies the flux 1-irregularity and hierarchy constraints. (a) A multi-level perspective that illustrates the hierarchical refinement structure. (b) A 2-D perspective of the same discretization in (a). Notice that in the stochastic space, the proposed approach permits multiple levels of hanging nodes.}
        \label{fig:example_discretizations}
\end{figure}

         An example discretization that belongs to $\mathbb{T}$ is illustrated in Fig. \ref{fig:example_discretizations}. Note that this does \textit{not} restrict refinements at interfaces $f\in\mathcal{F}(T)\setminus \mathcal{F}_y(T)$. \textit{Anisotropic} refinements, i.e., directional refinements, are also permissible \textit{subject to the constraint of flux 1-irregularity}. This imposition of flux 1-irregularity constraints facilitates conservation in isotropic or anisotropic refinements when refinement levels differ between adjacent cells and the flux integrals must be communicated. Fig. \ref{fig:example_discretizations}(a) depicts this general procedure, with the active flux interfaces and necessary flux communications shown. Note that in a case of a coarser cell adjacent to a finer cell, the flux integral is taken as the merged flux integrals of the two finer cells. Flux 1-irregularity, in some sense, is not \textit{necessary}, but affords a simplified, robust ruleset for ensuring conservation that is analogous to similar requirements in the context of $hp$ finite elements \cite{bangerth2011}.
        
We summarize the constraints imposed on $\mathcal{T}_\ell,\, \mathcal{T},$ and $\mathbb{T}$ below:
        \begin{enumerate}[label=(\roman*)]
        \item If $K_{\ell_i},\, K_{\ell_j},\, i\neq j$ are two control volumes on the $\ell$th level, then $K_{\ell_i}\cap\,K_{\ell_j}$ is either empty, a vertex, edge, face, etc.;
        \item If $K_{\ell_i},\, 0<\ell \leq n$, then there exists $j\in\mathcal{I}_k$, $0\leq k \leq n,\, k \neq \ell$ such that $\mathrm{interior}\left(K_{\ell_i}\right)\cap\mathrm{interior}(K_{k_j})\neq \emptyset$;
        \item If $K_{\ell_i}$ is an active cell, then for all $K_{k_j}$, $K_{k_j}\neq K_{\ell_i}$ that $\mathrm{interior}\left(K_{\ell_i}\right)\cap\mathrm{interior}(K_{k_j})$ is not the empty set, $K_{k_j}$ is inactive;
        \item $\mathcal{T}\in\mathbb{T}$ is at most 1-irregular in the sense of Definition \ref{def:irregularity}.
        \end{enumerate}
        
        The first three conditions above can be restated as: (i) hanging nodes are not permitted on the same level; (ii) any refined cell must be a descendant of a cell on a previous level; and (iii) if a descendant cell is active, then its ancestors must not be, and vice versa.

        From this point on, to simplify notation by $\mathcal{T}\in\mathbb{T}$, we refer to the leaves of $\mathbb{F}$ rather than the full hierarchy unless otherwise stated. However, the inheritance structure is relevant in the case of coarsening as is discussed in Section \ref{sec:coarsening}. While the approach outlined is largely agnostic to the choice of discretization, e.g., finite volume, finite element---continuous or discontinuous, we assume a finite volume ansatz in our exposition, i.e., piece-wise constant, unless otherwise specified.
       
        Now, suppose $\mathcal{T}\in\mathbb{T}$ is a refinement of some $\mathcal{T}_0$. Let $\mathbb{V}(\mathcal{T})$ denote a finite dimensional space generated by $\mathcal{T}$. In the context of finite volume methods, $U(\mathcal{T})\in\mathbb{V}(\mathcal{T})$ denotes a piecewise-constant construction on $\mathcal{T}$.

 We define the following reconstruction operator:
\begin{definition}[Reconstruction] A reconstructor $\mathbb{B}_{U(\mathcal{T})}(T; \xi):\mathbb{V}(\mathcal{T})\rightarrow\mathbb{R}^p$ maps a solution $U(\mathcal{T})\in\mathbb{V}(\mathcal{T})$ associated with DoFs on $\mathcal{T}$ to point values $\xi\in T$.
\end{definition}
Even under a piecewise constant ansatz, a reconstruction may result in, e.g., a higher-order, piecewise polynomial response. The \textit{conservative} reconstructions considered in our study include the ubiquitous essentially non-oscillatory (ENO) or weighted essentially non-oscillatory (WENO) reconstructions \cite{shu1998}. Note that while we have fluxes only in the physical directions, reconstruction includes point evaluations in the combined physical and stochastic spaces. Note also that by $\mathbb{B}$ we refer to a reconstruction of generic order, and by $\mathbb{B}^{{l}}$ an ${l}$th order accurate reconstruction in each direction, i.e.,
\begin{equation}\label{eq:reconstruction_order}
    \mathbb{B}^{l}_{U(\mathcal{T})}(T; \boldsymbol{\xi}) = \mathbf{u}(\xi) + O(\Delta\boldsymbol{\xi}^{l + 1}\cdot\mathbf{1}),\quad \boldsymbol{\xi}\in T,
\end{equation}
under conditions of sufficient smoothness, where $\Delta\boldsymbol{\xi}$ represents the vectorial diameter of the cell $T$. Moreover, for two reconstructions of order $\mathrm{H}$ and $\mathrm{L}$, $ \mathrm{L} < \mathrm{H} $, we assume there exists $C>0$ independent of $\Delta \boldsymbol{\xi}$ such that
\begin{equation}\label{eq:reconstruction_diff}
\left\lvert\mathbb{B}^{\mathrm{H}}_{U(\mathcal{T})}(T; \boldsymbol{\xi}) - \mathbb{B}^{\mathrm{L}}_{U(\mathcal{T})}(T; \boldsymbol{\xi}) \right\rvert \le C(\Delta \boldsymbol{\xi}^{L+1}).
\end{equation}

For adaptive schemes that include refinement and coarsening, we require notions that relate numerical solutions on $\mathcal{T}\in\mathbb{T}$ and $\mathcal{T}_*\in\mathbb{T}$:

\begin{definition}[Conservative Projection]\label{def:cons_projection}
A projection $\Pi_{\mathcal{T}} U(\mathcal{T}_*):\mathbb{V}(\mathcal{T}_*)\rightarrow\mathbb{V}(\mathcal{T})$ that maps $U(\mathcal{T}_*)$ to $\mathbb{V}_{\mathcal{T}}$
such that $\Pi_{\mathcal{T}}$ minimizes $\norm{U(\mathcal{T}_*) - \Pi_{\mathcal{T}}U(\mathcal{T}_*)}$ subject to the constraint
\begin{equation} \sum_{T_*\in\mathcal{T}_*} \int_{T_*} \mathbb{B}_{U(\mathcal{T}_*)}(T_*; \boldsymbol{\xi})\mu\left( \mathbf{y}\left(\boldsymbol{\xi}\right) \right)\,d\boldsymbol{\xi} =  \sum_{T\in\mathcal{T}}\int_{T} \mathbb{B}_{\Pi_{\mathcal{T}}U(\mathcal{T}_*)}(T; \boldsymbol{\xi})\mu\left( \mathbf{y}\left(\boldsymbol{\xi}\right) \right)\,d\boldsymbol{\xi}
\end{equation}
is a conservative projection.
\end{definition}

For $T_*\equiv T$, the projection $\Pi_{\mathcal{T}}$ is simply the identity mapping.

 \begin{definition}[Conservative Local Interpolation]\label{def:cons_interp}
 Let $\mathcal{T}\in\mathbb{T}$ and take $U(\mathcal{T})\in\mathbb{V}(\mathcal{T})$. Consider $\mathcal{T}_*\in\mathrm{refine}(\mathcal{T})$ and $\mathcal{M}_T\subset\mathcal{T}_*$ such that for all cells $T_*\in\mathcal{M}_T$, $T\in\mathcal{T}$ is the parent of $T_*$. Then, any $I_T(U(\mathcal{T}))$ that interpolates $U(\mathcal{T})$ using a finite nearest neighbor set of $T$ to $U(\mathcal{M}_T)$ such that
 $$ \int_{T} \mathbb{B}_{U(\mathcal{T})}(T; \xi)\mu\left( \mathbf{y}\left(\boldsymbol{\xi}\right) \right)\,d\boldsymbol{\xi} =  \sum_{T_*\in\mathcal{M}_T}\int_{T_*} \mathbb{B}_{I_T(U(\mathcal{T}))}(T_*; \boldsymbol{\xi})\mu\left( \mathbf{y}\left(\boldsymbol{\xi}\right) \right)\,d\boldsymbol{\xi},$$
 is a conservative local interpolant.
 \end{definition}

\begin{remark}
The constraints on $\Pi_{\mathcal{T}}$ and $I_T$ ensure that the mapping between approximation spaces maintains conservation (in the absence of conservation violating boundary conditions or excitations).
\end{remark}

We consider below a specific example of an interpolant as part of a full finite volume framework.
\begin{proposition}
Let $T\in\mathcal{T}$ and $\mathcal{M}_T\subset\mathcal{T}_*$ be as in Definition \ref{def:cons_interp}. A first-order interpolant
\begin{equation}\label{eq:interpolant} 
I_T(U(\mathcal{T});\, T_*) = U(T) + \tilde{\nabla}^1U(T)\cdot \left( T_*^{\mathrm{c}} - T^{\mathrm{c}} \right), \quad \forall T_*\in\mathcal{M}_T,
\end{equation}
where $\tilde{\nabla}^1(\cdot)$ denotes a limited finite difference (e.g., \texttt{minmod} or \texttt{van Leer}, \cite{vanleer1979}) using the first nearest neighbors of $T$, and $T_*^{{\mathrm{c}}}$ is the probabilistic center of the cell $T_*$, i.e.,
\begin{align}
    T_*^{{\mathrm{c}}} &= \begin{bmatrix}
                                        |{T_*}_x|^{-1}\int_{{T_*}_x} \mathbf{x}\, d\mathbf{x} \\
                                        \mathbb{E}[\mathbf{y}\,|\, \mathbf{y}\in {T_*}_y]
                                        \end{bmatrix},
\end{align}
is a conservative local interpolant in the sense of Definition \ref{def:cons_interp}.
\begin{proof}
Assume a conservative reconstruction $\mathbb{B}$. Let $T^{\mathrm{c}}$ denote the probabilistic center of cell $T$ and let $\tilde{\nabla}^1 U(T)$ represent the \texttt{van Leer} gradient. In the neighborhood of $T^{\mathrm{c}},$ we have the first-order interpolant
\begin{equation}\label{eq:f_interpolant} \mathcal{J}(\boldsymbol{\xi}) = U(T) + \tilde{\nabla}^1 U(T)\cdot (\boldsymbol{\xi} - T^{\mathrm{c}}).\end{equation}

Seeking the interpolated cell averages from \eqref{eq:f_interpolant}, we integrate over $T_*\in\mathcal{M}_T$ with respect to the stochastic measure to obtain
\begin{dmath}
I_T(U(\mathcal{T});\, T_*) = h^{-1}_{T_*}\int_{T_*} \mathcal{J}(\boldsymbol{\xi})\mu\left(\mathbf{y}\left(\boldsymbol{\xi}\right)\right)\, d\boldsymbol{\xi} = U(T) + h^{-1}_{T_*}\int_{T_*} \tilde{\nabla}^1 U(T)\cdot (\boldsymbol{\xi} - T^{\mathrm{c}})\mu\left(\mathbf{y}\left(\boldsymbol{\xi}\right)\right)\, d\boldsymbol{\xi} = U(T) + \tilde{\nabla}^1 U(T)\cdot h^{-1}_{T_*}\int_{T_*} (\boldsymbol{\xi} - T^{\mathrm{c}})\mu\left(\mathbf{y}\left(\boldsymbol{\xi}\right)\right) \, d\boldsymbol{\xi}  = U(T) + \tilde{\nabla}^1U(T)\cdot \left( T_*^{\mathrm{c}} - T^{\mathrm{c}} \right),
\end{dmath}
which yields the interpolant \eqref{eq:interpolant}.
For conservative $\mathbb{B},$
\begin{equation}
h^{-1}_{T_*}\int_{T} \mathbb{B}_{I_T(U(\mathcal{T}))}(T_*; \boldsymbol{\xi})\mu\left(\mathbf{y}\left(\boldsymbol{\xi}\right) \right)\,d\boldsymbol{\xi} = I_T\left(U(\mathcal{T});\, T_*\right),
\end{equation}
i.e., the interpolant preserves averages, and likewise for $U(\mathcal{T})$. 
Hence, we obtain
\begin{equation}\sum_{T_*\in\mathcal{M}_T} h_{T_*} I_T\left(U(\mathcal{T});\, T_*\right) = \int_{T} \mathcal{J}(\boldsymbol{\xi})\mu\left(\mathbf{y}\left(\boldsymbol{\xi}\right)\right)\, d\boldsymbol{\xi} = h_T U(T)
\end{equation}
for the interpolant.
\end{proof}
\end{proposition}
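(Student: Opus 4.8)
The plan is to exhibit the affine function underlying the interpolant, verify that it reproduces the parent average, and then show that summing its $\mu$-weighted averages over the children recovers that same average, which is precisely the conservation constraint of Definition \ref{def:cons_interp}. First I would introduce the first-order reconstruction $\mathcal{J}(\boldsymbol{\xi}) = U(T) + \tilde{\nabla}^1 U(T)\cdot(\boldsymbol{\xi} - T^{\mathrm{c}})$ centered at the probabilistic center $T^{\mathrm{c}}$ of the parent cell $T$. The choice of basepoint $T^{\mathrm{c}}$ is not incidental: because $T^{\mathrm{c}}$ is the $\mu$-weighted centroid of $T$, the linear term integrates to zero against $\mu$ over $T$, so that $h_T^{-1}\int_T \mathcal{J}(\boldsymbol{\xi})\mu(\mathbf{y}(\boldsymbol{\xi}))\,d\boldsymbol{\xi} = U(T)$. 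This identifies $\mathcal{J}$ as a genuinely conservative first-order reconstruction on $T$.

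Next I would compute the $\mu$-weighted average of $\mathcal{J}$ over an individual child $T_* \in \mathcal{M}_T$. Factoring the integral through the product structure $T_* = {T_*}_x \times {T_*}_y$ and using that $\mu$ depends on $\mathbf{y}$ alone, the first moment $h_{T_*}^{-1}\int_{T_*}(\boldsymbol{\xi} - T^{\mathrm{c}})\mu\,d\boldsymbol{\xi}$ separates into a plain centroid in the physical block and a conditional expectation $\mathbb{E}[\mathbf{y}\,|\,\mathbf{y}\in{T_*}_y]$ in the stochastic block; by the definition of the probabilistic center this is exactly $T_*^{\mathrm{c}} - T^{\mathrm{c}}$. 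Hence the cell average of $\mathcal{J}$ over $T_*$ equals $U(T) + \tilde{\nabla}^1 U(T)\cdot(T_*^{\mathrm{c}} - T^{\mathrm{c}})$, which is the claimed formula \eqref{eq:interpolant} for $I_T(U(\mathcal{T});\,T_*)$.

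Finally I would verify the conservation constraint itself. Invoking the conservative-reconstruction hypothesis on $\mathbb{B}$, the $\mu$-weighted average of $\mathbb{B}_{I_T(U(\mathcal{T}))}(T_*;\cdot)$ over $T_*$ equals the interpolated average $I_T(U(\mathcal{T});\,T_*)$, and likewise $\mathbb{B}_{U(\mathcal{T})}(T;\cdot)$ averages to $U(T)$ over $T$. It then remains to sum $h_{T_*} I_T(U(\mathcal{T});\,T_*)$ over $T_*\in\mathcal{M}_T$. Because $\mathcal{M}_T$ is a non-overlapping, complete cover of the parent $T$ (implicit in the hierarchical refinement structure), additivity of the integral gives $\sum_{T_*\in\mathcal{M}_T} h_{T_*} I_T(U(\mathcal{T});\,T_*) = \int_T \mathcal{J}(\boldsymbol{\xi})\mu(\mathbf{y}(\boldsymbol{\xi}))\,d\boldsymbol{\xi} = h_T U(T)$, which is the desired identity.

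The main obstacle I anticipate is the separation step in the second paragraph: one must confirm that the $\mu$-weighted first moment of $(\boldsymbol{\xi} - T^{\mathrm{c}})$ over $T_*$ collapses to the difference of probabilistic centers. This relies on the product structure $T_* = {T_*}_x\times{T_*}_y$ together with the fact that the density $\mu$ acts only on the stochastic coordinates, so that the physical and stochastic components decouple, the physical component yielding an ordinary centroid and the stochastic component the conditional expectation that defines $T_*^{\mathrm{c}}$. Everything else is routine additivity of integrals over the partition $\mathcal{M}_T$ and a direct appeal to the conservative property assumed of $\mathbb{B}$.
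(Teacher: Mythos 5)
Your proposal is correct and follows essentially the same route as the paper's proof: define the affine reconstruction $\mathcal{J}$ centered at the probabilistic center, average it over each child $T_*\in\mathcal{M}_T$ to recover the formula for $I_T$, invoke the conservative property of $\mathbb{B}$ to identify reconstructed averages with interpolated averages, and sum over the partition to obtain $\sum_{T_*\in\mathcal{M}_T} h_{T_*} I_T(U(\mathcal{T});\,T_*) = h_T U(T)$. The only difference is that you make explicit two facts the paper leaves implicit---the decoupling of the $\mu$-weighted first moment into a physical centroid and a stochastic conditional expectation, and the vanishing of the linear term over $T$---which is a welcome addition rather than a deviation.
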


\section{Error Analysis and Adaptivity}\label{sec:error_analysis} With the mathematical foundations for adaptive discretizations established, we investigate the \textit{a priori} convergence behavior of the SFV method in Sections \ref{sec:convergence} and \ref{sec:ex:transport} below. Following those developments, we construct the pillars of our adaptive scheme---including refinement, coarsening, and equilibration---in Sections \ref{sec:refinement} through \ref{sec:equilibration}, on which we sculpt a predictor-corrector adaptive scheme for studying hyperbolic PDEs under uncertainty in Section \ref{sec:predictor_corrector}. 
\subsection{Convergence Results for the SFV Method}\label{sec:convergence}
We begin by considering the push-forward density $f$ of a solution $u$ that evolves as a result of the underlying uncertainty at a space-time coordinate $(\mathbf{x},\, t)$. In the following exposition, we assume that $u$ is scalar; the general, vector-valued case follows from an otherwise identical component-wise treatment.
\begin{theorem}[SFV Method PDF Approximation Error]\label{thm:pdf_error_convergence}
 Suppose $f$ is sufficiently smooth. Let $\hat{f}$ denote the approximation to $f$ via the stochastic finite volume method of order $(r,\,\ell)$ with physical diameter $\Delta_x$ and stochastic diameter $\Delta_y$. Then the approximation error induced by the discretization is bounded such that
 \begin{displaymath}
 \norm{f-\hat{f}} \le M \left(\Delta_x^r + \Delta_y^{\ell}\right) + \mathcal{R}, \quad  \mathcal{R},\,M >0,
 \end{displaymath}
 where $M>0$ depends on $n,\, h,$ but not on $\Delta_x$ or $\Delta_y$, and $\mathcal{R}$ is a remainder with respect to $\Delta_x$, $\Delta_y$, $n$, and $h$.
\end{theorem}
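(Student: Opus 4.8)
The plan is to mirror the triangle-inequality decomposition already exploited in Theorems~\ref{thm:solution_L1} and \ref{thm:expectation_L1}, but now at the level of the push-forward density rather than the solution itself. I fix the space--time coordinate $(\mathbf{x},t)$ and regard the solution as the \emph{parameter-to-value map} $g(\mathbf{y}) = u(\mathbf{x},t,\mathbf{y})$, so that $f$ is the density of the push-forward measure $g_\#(\mu\,d\mathbf{y})$. Writing $g^y$ and $g^{xy}$ for the corresponding maps built from the semi-discrete solution $u^y$ (exact in $\mathbf{x}$, discretized in $\mathbf{y}$) and the fully discrete reconstruction $u^{xy} = \hat{u}$, and letting $f^y,\hat f$ denote their push-forward densities, I would first split
\[
\norm{f - \hat f} \le \norm{f - f^y} + \norm{f^y - \hat f},
\]
and bound the two contributions by the stochastic ($\Delta_y^{\ell}$) and physical ($\Delta_x^{r}$) discretization errors, respectively, in exact analogy with the two hypotheses of Theorem~\ref{thm:solution_L1}.

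The engine for each bound is a stability estimate for the push-forward operator $g \mapsto g_\#(\mu\,d\mathbf{y})$. Invoking the co-area formula, the density admits the representation
\[
f(s) = \int_{\{g = s\}} \frac{\mu(\mathbf{y})}{\lvert \nabla_{\mathbf{y}} g(\mathbf{y})\rvert}\, d\sigma(\mathbf{y}),
\]
with an analogous expression for $\hat f$. Because $f$ is assumed sufficiently smooth, the map $g$ has no degenerate critical level near the support of $f$, so $\lvert \nabla_{\mathbf{y}} g\rvert$ is bounded away from zero on the relevant level sets and the representation is a Lipschitz (indeed $C^1$) functional of $g$ in the $W^{1,\infty}_{\mathbf{y}}$ topology. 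I would linearize this functional in the perturbation $\delta g = g - g^y$ (respectively $g^y - g^{xy}$), producing a leading term linear in $\delta g$ and $\nabla_{\mathbf{y}}\delta g$ together with a quadratic-and-higher remainder. Feeding in the solution-error hypotheses underlying Theorem~\ref{thm:solution_L1}, together with the pointwise reconstruction accuracy \eqref{eq:reconstruction_order} (whose derivatives control $\nabla_{\mathbf{y}}\delta g$), I would bound the leading term by $M(\Delta_x^{r} + \Delta_y^{\ell})$, where $M$ collects the reciprocal-Jacobian bounds, the per-cell Lipschitz constants, and the cell count---hence its dependence on $n$ and $h$ but not on $\Delta_x,\Delta_y$. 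The quadratic remainder, together with the quadrature error incurred when the histogram density is actually assembled from the piecewise reconstruction over the stochastic cells, is then absorbed into $\mathcal{R}$.

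The main obstacle is precisely the reciprocal Jacobian $1/\lvert\nabla_{\mathbf{y}} g\rvert$. Unlike the solution bounds of Theorem~\ref{thm:solution_L1}, the density error depends on the \emph{derivative} error $\nabla_{\mathbf{y}}\delta g$, which is a full order less accurate than $\delta g$ itself; and wherever $g$ develops a critical point in $\mathbf{y}$---as occurs generically near stochastic extrema or shocks---the push-forward density is singular and the linearization breaks down. The smoothness hypothesis on $f$ is exactly the assumption that excludes such degeneracies, and the crux of the argument is to make the stability estimate for the push-forward operator quantitative under this hypothesis, verifying that the derivative-error and higher-order contributions are genuinely of the stated form and can be consigned to $M$ and $\mathcal{R}$ without spoiling the $\Delta_x^{r} + \Delta_y^{\ell}$ leading behavior.
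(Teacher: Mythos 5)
Your proposal diverges from the paper's proof at the level of what $\hat f$, $n$, and $h$ actually are, and this matters. In the paper, $\hat f$ is not the exact push-forward density of the reconstructed solution: it is a kernel density estimate $\hat f(u_\phi) = \frac{1}{nh}\sum_{i=1}^n K\bigl((u_\phi-\hat u_i)/h\bigr)$ assembled from $n$ SFV-evaluated samples $\hat u_i$ with smooth kernel $K$ and bandwidth $h$ --- which is precisely why the statement says $M$ depends on $n$ and $h$ and why $\mathcal{R}$ is a remainder in $\Delta_x,\Delta_y,n,h$. The paper splits $\norm{f-\hat f}\le\norm{f-f_K}+\norm{f_K-\hat f}$, where $f_K$ is the same estimator built on \emph{exact} samples $u_i$: the first term is an $\varepsilon$ absorbed by smoothness of $f$, and the second is handled by Taylor-expanding the smooth kernel, so that only the $L^\infty$ solution error $\lvert \hat u_i-u_i\rvert\le C(\Delta_x^r+\Delta_y^\ell)$ (the $L^\infty$ analogue of Theorem \ref{thm:solution_L1}) enters --- no derivative information about the parameter-to-solution map is ever needed. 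Your attribution of the $(n,h)$-dependence to cell counts and per-cell Lipschitz constants misreads the statement; $n$ and $h$ are sampling parameters of the estimator, not mesh quantities.

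This misreading is not cosmetic, because your co-area route runs into exactly the obstruction you flag and then leave open. Linearizing $f(s)=\int_{\{g=s\}}\mu/\lvert\nabla_{\mathbf y}g\rvert\,d\sigma$ requires controlling $\nabla_{\mathbf y}\delta g$, but the hypotheses actually available --- Theorem \ref{thm:solution_L1} and the reconstruction accuracy \eqref{eq:reconstruction_order} --- control only \emph{values}; an $\ell$th-order reconstruction generically loses an order per derivative, so your ``leading term'' is bounded only at rate $\Delta_x^{r-1}+\Delta_y^{\ell-1}$ (and the perturbation of the level sets themselves costs more near-degenerate points), which spoils the claimed $\Delta_x^r+\Delta_y^\ell$. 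Worse, under the paper's piecewise-constant ansatz the push-forward of the discrete solution is atomic, so the density $\hat f$ as you define it need not exist, and smoothness of $f$ does not rescue the discrete object. What you identify as ``the crux'' is therefore a genuine gap that cannot be closed under the theorem's stated hypotheses; the paper avoids it by construction, smoothing through a kernel of fixed bandwidth $h$ \emph{before} comparing exact and discrete solutions, at the cost of constants that blow up as $n,h$ vary --- a trade-off your version of the statement would not need but also cannot deliver.
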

\begin{proof}
  Smoothness of $f$ implies that for all $\varepsilon > 0$ there exists $n>N$ such that
  $$\norm{f - f_K} \le \varepsilon,$$
  where $$f_K(u_\phi) = \frac{1}{nh} \sum_{i=1}^n K\left(\frac{u_\phi - u_i}{h}\right),$$ for a smooth kernel $K$, bandwidth $h$, and exact samples $u_i$ of $u$ at $\left( \mathbf{x}, t\right).$

  Let $\hat{f} = \frac{1}{nh} \sum_{i=1}^n K(\frac{u_\phi - \hat{u}_i}{h})$ denote the associated SFVM-based approximation via $\hat{u}$.
  Then \begin{equation}
      \norm{f-\hat{f}} = \norm{f - f_K + f_K - \hat{f}} \le \norm{f-f_K} + \norm{f_K-\hat{f}}.
  \end{equation}
  By smoothness of $f$, we have that
  \begin{align}\label{eq:sampling_error}
      \norm{f-f_K} &\le \varepsilon,\\
      \implies \norm{f-\hat{f}} &\le \varepsilon + \norm{f_K-\hat{f}}.
  \end{align}
  Expanding $\norm{f_K-\hat{f}}$ results in
  \begin{dmath}
      \norm{f_K-\hat{f}} = \frac{1}{nh} \norm{ \sum_{i = 1}^n K\left(\frac{u_\phi - u_i}{h}\right) - K\left(\frac{u_\phi - \hat{u}_i}{h}\right)}
      \le \frac{1}{nh}\sum_{i=1}^n \norm{K\left(\frac{u_\phi - u_i}{h}\right) - K\left(\frac{u_\phi - \hat{u}_i}{h}\right)}
  \end{dmath}
  
    Smoothness of $K$ implies the existence of an expansion
    \begin{equation}
        K(u) = K(u_0) + (u-u_0)K'(u_0) + O((u-u_0)^2)
    \end{equation}
      \begin{align}
          \implies \norm{K\left(\frac{u_\phi - u_i}{h}\right) - K\left(\frac{u_\phi - \hat{u}_i}{h}\right)} &= \norm{M\left(\frac{\hat{u}_i}{h} - \frac{{u}_i}{h}\right) + O((\hat{u}_i - {u}_i)^2)} \\
          &\le C_i \left(\Delta_x^r + \Delta_y^{\ell}\right) + \mathcal{R},
      \end{align}
      for a remainder $\mathcal{R} > 0$, where we have applied, under the assumption of smoothness, the uniform convergence of $\hat{u}_i$ to $u_i$ implied by the $L^\infty$-norm equivalent of Thm. \ref{thm:solution_L1}.
      
      Hence,
      \begin{dmath}\label{eq:SFVM_error}\norm{f_K-\hat{f}}_\chi \le  \frac{1}{nh} \sum_{i=1}^n C_i \left(\Delta_x^r + \Delta_y^{\ell}\right) + \mathcal{R} \le \frac{1}{nh} \sum_{i=1}^n \sup_i(C_i) \left(\Delta_x^r + \Delta_y^{\ell}\right) + \mathcal{R} \le M \left(\Delta_x^r + \Delta_y^{\ell}\right) + \mathcal{R},\end{dmath}
      where $M>0$ depends on $n$ and $h$ but not on $\Delta_x$ or $\Delta_y$.
      Combining \eqref{eq:sampling_error} and \eqref{eq:SFVM_error} yields
      \begin{equation}
          \norm{f-\hat{f}} \le M \left(\Delta_x^r + \Delta_y^{\ell}\right) + \mathcal{R},
      \end{equation}
      which completes the proof.
\end{proof}
Note that under the conditions of \cref{thm:pdf_error_convergence}, we obtain exponential convergence with respect to SFVM order.

We now state the following theorem for the convergence of a \textit{simplified estimator}, which considers the stochasticity of the averages. As opposed to the point-wise results above, we consider this an \textit{intrinsic} estimator, in the sense that it only employs piecewise constant data. As we will see, this representation reveals an inexpensive refinement indicator.
\begin{theorem}[SFVM CDF Approximation Error]\label{thm:CDF_convergence_SFV}
 Let $F(g;\,x,\,t) = P\left(U(x,\,t) \le g\right)$ denote the \textit{exact} cumulative distribution function (CDF) of the cell averages, and by $\hat{{F}}$ its SFV approximation. Then the approximation error induced by the SFV discretization is bounded such that
 \begin{displaymath}
 \norm{{F}-\hat{{F}}_{\mathrm{SFVM}}}_{L^1} \le C_0\left(\Delta_x^r\right) + C_1\left(\Delta_y + \Delta_y^{\ell}\right), 
 \end{displaymath}
 where $\norm{\cdot}_{L^1}\equiv\norm{\cdot}_{L^1(\Omega_{\mathrm{phys}}\times\mathbb{R})}$.
\end{theorem}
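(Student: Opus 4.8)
The plan is to reduce the CDF error to an expected absolute difference of cell values and then invoke the already-established $L^1$ solution bound. First I would unfold the norm as $\norm{F-\hat{F}}_{L^1} = \int_{\Omega_{\mathrm{phys}}}\int_{\mathbb{R}} |F(g;x,t)-\hat{F}(g;x,t)|\,dg\,dx$ and, for each fixed $x$, recognize the inner integral as the $L^1$-distance between the CDFs of two real-valued random variables: the exact solution value $X(y) := u(x,t;y)$ and the SFV piecewise-constant cell value $Y(y) := \hat{U}_{T(y)}(x,t)$, both regarded as functions of the random parameter $\mathbf{y}$ with law $\mu$.

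The probabilistic crux is the identity
\begin{equation*}
\int_{\mathbb{R}} |F_X(g)-F_Y(g)|\,dg \;=\; \mathbb{E}_y|X(y)-Y(y)|,
\end{equation*}
which follows from the layer-cake computation $\int_{\mathbb{R}}|\mathbbm{1}_{X\le g}-\mathbbm{1}_{Y\le g}|\,dg = |X-Y|$ together with Fubini (equivalently, it is the one-dimensional Wasserstein-$1$ distance evaluated on the natural coupling that shares the same draw of $\mathbf{y}$). Applying this pointwise in $x$ and integrating gives
\begin{equation*}
\norm{F-\hat{F}}_{L^1} \;\le\; \int_{\Omega_{\mathrm{phys}}}\int_{D_{\mathrm{stoch}}} \bigl|u(x,t;y)-\hat{U}_{T(y)}(x,t)\bigr|\,\mu(y)\,dy\,dx.
\end{equation*}

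Next I would split the integrand through the exact cell average $\bar{U}_{T} = \mathbb{E}[u\,|\,\mathbf{y}\in T_y]$ via the triangle inequality into a \emph{binning} term $|u(x,t;y)-\bar{U}_T|$ and a \emph{discretization} term $|\bar{U}_T-\hat{U}_T|$. Smoothness of $u$ in $\mathbf{y}$ makes the solution vary by at most $O(\Delta_y)$ across a stochastic cell, so the binning term integrates (after summing the cell masses $P(\mathbf{y}\in T_y)$ to one) to a contribution bounded by $C_1\Delta_y$; this is precisely the new term absent from the raw $L^1$ bounds, and it reflects that $\hat{F}$ is assembled from piecewise-constant data. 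The discretization term is the difference between the exact and SFV cell averages; bounding it by $h_T^{-1}\int_T |u-u^{xy}|\mu\,d\boldsymbol{\xi}$ and summing $h_T|\bar{U}_T-\hat{U}_T|$ over cells reduces it exactly to $\norm{u-u^{xy}}_{L^1}$, which Theorem \ref{thm:solution_L1} controls by $C_0\Delta_x^r + C_1\Delta_y^\ell$. Combining the two contributions yields $\norm{F-\hat{F}}_{L^1}\le C_0\Delta_x^r + C_1(\Delta_y+\Delta_y^\ell)$, as claimed.

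The main obstacle I anticipate is the first reduction: making rigorous that the $g$-integral of the CDF gap equals an expected absolute difference, and in particular verifying that coupling the exact value with the SFV cell value through the shared parameter $\mathbf{y}$ reproduces the discrete SFV push-forward as the second marginal. Once that identity is in hand the remainder is routine---the only care needed is to isolate the $O(\Delta_y)$ binning error from the $O(\Delta_y^\ell)$ discretization error, since it is exactly this separation that produces the $\Delta_y+\Delta_y^\ell$ structure and, later, the inexpensive refinement indicator advertised after the statement.
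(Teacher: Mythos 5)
Your proposal is correct, and it reaches the paper's bound by a reorganized but recognizably parallel route. The paper first splits at the CDF level, inserting the intermediate discrete CDF $F_{\mathrm{SFVM}}$ built from \emph{exact} cell averages, bounds $\norm{F-F_{\mathrm{SFVM}}}_{L^1}\le C\Delta_y$ by appeal to convergence of stochastic-cell averages to point values, and then handles $\norm{F_{\mathrm{SFVM}}-\hat{F}_{\mathrm{SFVM}}}_{L^1}$ by the indicator-bracketing argument adapted from Chaudhry et al., $\lvert\mathbbm{1}(U_i\le g)-\mathbbm{1}(\hat{U}_i\le g)\rvert\le\mathbbm{1}(\hat{U}_i-\lvert\varepsilon_i\rvert\le g\le \hat{U}_i+\lvert\varepsilon_i\rvert)$, whose $g$-integral yields $2\lvert\varepsilon_i\rvert$ weighted by cell probabilities; you instead perform the CDF-to-value reduction \emph{once}, globally, via the layer-cake/Wasserstein-$1$ identity with the coupling through the shared draw of $\mathbf{y}$, and only then split at the value level into binning ($O(\Delta_y)$) and discretization ($\norm{u-u^{xy}}_{L^1}$ via Theorem \ref{thm:solution_L1}) terms. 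These are the same two ingredients in reversed order --- your layer-cake computation is exactly the mechanism underlying the paper's bracketing step --- but your packaging buys something: it makes rigorous the paper's somewhat terse first-term bound \eqref{eq:CDF_volume_error} (the paper asserts $C\Delta_y$ without passing through an explicit coupling), and it sharpens the constant by avoiding the factor $2$ from the symmetric bracket. What the paper's ordering buys in exchange is that the intermediate quantity \eqref{eq:to_error_estimator}, $\int_{\Omega_{\mathrm{phys}}}\sum_i P(K_i^{\mathrm{stoch}})2\lvert\varepsilon_i\rvert\,dx$, appears explicitly, which is precisely where the computable refinement indicator of Corollary \ref{cor:error_indicator} is read off; your argument produces the same weighted average-error functional, so the corollary survives intact. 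Two small cautions: your claimed \emph{equality} $\int_{\mathbb{R}}\lvert F_X-F_Y\rvert\,dg=\mathbb{E}_y\lvert X-Y\rvert$ holds only for the comonotone coupling --- the shared-$\mathbf{y}$ coupling gives an inequality ($\le$), which is fortunately the direction you actually use in the subsequent display; and in the discretization term you define $\bar{U}_T$ as a $y$-only conditional average at fixed $x$ but later sum $h_T\lvert\bar{U}_T-\hat{U}_T\rvert$ as if it were a per-cell constant --- under the piecewise-constant ansatz, where $u^{xy}\equiv\hat{U}_T$ on $T$, the estimate $\lvert\mathbb{E}_y[u(x,\cdot)\,\vert\,T_y]-\hat{U}_T\rvert\le\mathbb{E}_y[\lvert u-u^{xy}\rvert\,\vert\,T_y]$ integrates in $x$ to $\norm{u-u^{xy}}_{L^1}$ either way, so the bookkeeping is harmless but worth stating precisely.
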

\begin{proof}
  For every $x\in\Omega_{\mathrm{phys}}$ there exists an index set $\mathcal{I}(x)$ associated with the cells $K_i,\, i\in\mathcal{I}(x),$ in the computational domain $\Omega_{\mathrm{phys}}\times D_{\mathrm{stoch}}$ such that
 $$ x\in K_i^{\mathrm{phys}},\quad\sum_{i\in\mathcal{I}(x)} P(y\in K_i^{\mathrm{stoch}}) = 1.$$ We construct the approximation $\hat{{F}}_{\mathrm{SFVM}}$ via
 $$ \hat{{F}}_{\mathrm{SFVM}}(g) = \sum_{i\in\mathcal{I}(x)} P \left(y\in K_i^{\mathrm{stoch}} \right)\mathbbm{1}(\hat{{U}}_i \le g).$$
 
 Let ${F}_{\mathrm{SFVM}}$ denote the equivalent representation but with exact averages. Then the $L^1$ approximation error satisfies
 \begin{dmath}\label{eq:expanded_CDF_error} \norm{{F}-\hat{{F}}_{\mathrm{SFVM}}}_{L^1} = \norm{{F}-{F}_{\mathrm{SFVM}} + {F}_{\mathrm{SFVM}} - \hat{{F}}_{\mathrm{SFVM}}}_{L^1} \le  \norm{{F}-{F}_{\mathrm{SFVM}}}_{L^1} + \norm{{F}_{\mathrm{SFVM}}-\hat{{F}}_{\mathrm{SFVM}}}_{L^1}.\end{dmath}
 The first term in \eqref{eq:expanded_CDF_error} represents the error introduced by treating the variation in the stochastic space according to volumes with exact averages, as opposed to point values, \textit{and treating that data as equivalent to the point valued data}, that is, assuming the solution is constant within a given volume. This is in major contrast to the reconstructed, point-wise data convergence results derived above. The discrepancy of this term is related to the convergence of averages in $D_{\mathrm{stoch}}$ to point values, i.e.,
 \begin{equation}\label{eq:CDF_volume_error} \norm{{F} - {F}_{\mathrm{SFVM}}}_{L^1} \le C \Delta_y.\end{equation}

 Expanding the second term on the right-hand-side of \eqref{eq:expanded_CDF_error} leads to
 \begin{align} \label{eq:before_splitting_error}\norm{{F}_{\mathrm{SFVM}}-\hat{{F}}_{\mathrm{SFVM}}}_{L^1} &= \norm{\sum_{i\in\mathcal{I}(x)} P \left(y\in K_i^{\mathrm{stoch}} \right)\left(\mathbbm{1}({U}_i \le g) - \mathbbm{1}(\hat{{U}}_i \le g)\right)}_{L^1} \\ &= \norm{\sum_{i\in\mathcal{I}(x)} P \left(y\in K_i^{\mathrm{stoch}} \right)\left(\mathbbm{1}(\hat{{U}}_i + \varepsilon_i \le g) - \mathbbm{1}(\hat{{U}}_i \le g)\right)}_{L^1}.\end{align}
Modifying the arguments of \cite[Thm. 4]{chaudhry2021} results in
 
 \begin{align}\norm{{F}_{\mathrm{SFVM}} - \hat{{F}}_{\mathrm{SFVM}}}_{L^1} &\le \norm{\sum_{i\in\mathcal{I}(x)} P \left(y\in K_i^{\mathrm{stoch}} \right)\left(\mathbbm{1}(\hat{{U}}_i - |\varepsilon_i| \le g \le \hat{{U}}_i + |\varepsilon_i|)\right)}_{L^1} 
 \\ &\le \int_{\Omega_{\mathrm{phys}}} \sum_{i\in\mathcal{I}(x)} \int_G P \left(y\in K_i^{\mathrm{stoch}} \right)\left(\mathbbm{1}(\hat{{u}}_i - |\varepsilon_i| \le g \le \hat{{u}}_i + |\varepsilon_i|)\right)\, dx dg
 \\ &= \int_{\Omega_{\mathrm{phys}}} \sum_{i\in\mathcal{I}(x)}P(K_i^{\mathrm{stoch}})2|\varepsilon_i| d\, x \label{eq:to_error_estimator}
 \\ &\le C(\Delta_x^r + \Delta_y^{\ell}).\label{eq:second_term_CDF_avs}
 \end{align}
 Combining \eqref{eq:CDF_volume_error} and \eqref{eq:second_term_CDF_avs}, we observe that
 \begin{equation}
     \norm{{F}-\hat{{F}}_{\mathrm{SFVM}}}_{L^1} \le C_0\left(\Delta_x^r\right) + C_1\left(\Delta_y + \Delta_y^{\ell}\right).
 \end{equation}
\end{proof}
\begin{remark}
The implications of Thm. \ref{thm:CDF_convergence_SFV} are twofold. First, lower orders in the stochastic space may be employed without significant impact on these computed quantities, which confirms previously established experimental results \cite{tokareva2013}. Second, recovering high-order rates of convergence in the stochastic space requires application of the reconstruction procedure outlined above in the stochastic space. 
\end{remark}

\begin{corollary}[A computable refinement indicator]\label{cor:error_indicator}
For the CDF, it follows immediately from \eqref{eq:to_error_estimator} that an error estimate for the cell $T$ can be given by
\begin{equation}\label{eq:computable_refinement_indicator}
    \eta_T = C_{\mathrm{ref}}h_{T}\left\lvert\varepsilon_T\right\rvert,\quad C_{\mathrm{ref}} > 0,
\end{equation}
which, though computed only from the volume averages, approaches zero at least as fast as the full, reconstructed solution values.
\end{corollary}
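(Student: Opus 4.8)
The plan is to read the per-cell indicator directly off the global bound \eqref{eq:to_error_estimator} and then to certify its decay rate against the reconstruction estimates of \Cref{sec:refinement_framework}. First I would localize the summand on the right-hand side of \eqref{eq:to_error_estimator}, namely $\int_{\Omega_{\mathrm{phys}}} \sum_{i\in\mathcal{I}(x)} P(y\in K_i^{\mathrm{stoch}})\,2|\varepsilon_i|\,dx$, which is a sum of non-negative contributions indexed by the active cells. Isolating a single control volume $T = T_x\times T_y$, its contribution is $\int_{T_x} P(y\in T_y)\,2|\varepsilon_T|\,dx = 2\,|T_x|\,|T_y|\,|\varepsilon_T|$, where I use that $|\varepsilon_T|$ and $P(y\in T_y)$ are constant over $T_x$ and that $|T_y| = \int_{T_y}\mu(\mathbf{y})\,d\mathbf{y} = P(y\in T_y)$. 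Since $h_T = |T_x||T_y|$, this equals $2h_T|\varepsilon_T|$, so taking $C_{\mathrm{ref}}=2$ yields \eqref{eq:computable_refinement_indicator}. This is the precise sense in which the estimate follows immediately from \eqref{eq:to_error_estimator}: $\eta_T$ is exactly the local share of the global CDF error attributable to the inexactness of the cell average on $T$.

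Next I would establish the decay claim. Because $\varepsilon_T = U_T - \hat U_T$ is the error in the piecewise-constant cell average, the $L^\infty$ analog of \Cref{thm:solution_L1} used in the proof of \Cref{thm:CDF_convergence_SFV} gives $|\varepsilon_T| \le C(\Delta_x^r + \Delta_y^\ell)$ under the smoothness hypotheses, whence $\eta_T = O\!\left(h_T(\Delta_x^r + \Delta_y^\ell)\right)$. It then remains to compare this with an indicator assembled from full point-value reconstructions, whose enriched–reduced pair contributes at the order $\Delta\boldsymbol{\xi}^{L+1}$ fixed by \eqref{eq:reconstruction_order} and \eqref{eq:reconstruction_diff}. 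Since the scheme order $(r,\ell)$ is realized precisely through these reconstructions, the cell-average error and the reconstruction error share the same leading dependence on $\Delta_x$ and $\Delta_y$; consequently $\eta_T$, though built only from volume averages, inherits that rate and tends to zero at least as fast as the reconstruction-based quantity as $\Delta_x,\Delta_y\to 0$.

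The main obstacle I anticipate is exactly this last comparison: making rigorous the assertion that replacing reconstructed point values by piecewise-constant averages in the indicator costs nothing asymptotically. One must rule out a spurious low-order floor in $|\varepsilon_T|$, i.e., confirm that the volume-averaging step does not inject a term decaying more slowly than the reconstruction error; this is where smoothness of $f$ (hence of the underlying averages) together with consistency of the conservative reconstruction \eqref{eq:reconstruction_order} does the work. A secondary point worth stating explicitly is that in practice $\varepsilon_T$ is not the unavailable exact error but its surrogate from the enriched–reduced reconstruction pair; by \eqref{eq:reconstruction_diff} this surrogate is controlled at the same order, so the computable indicator \eqref{eq:computable_refinement_indicator} retains the asserted decay.
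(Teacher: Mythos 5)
Your proposal is correct and follows the paper's intended argument exactly: the paper gives no separate proof, reading $\eta_T$ off the localized summand $2h_T|\varepsilon_T|$ in \eqref{eq:to_error_estimator} precisely as you do (with $C_{\mathrm{ref}}$ absorbing the factor $2$), and the decay claim is just the bound \eqref{eq:second_term_CDF_avs}, i.e., $|\varepsilon_T|\le C(\Delta_x^r+\Delta_y^{\ell})$ for the averages, matching the reconstructed point-value rate. Your closing remarks on the enriched--reduced surrogate for $\varepsilon_T$ anticipate the paper's own discussion following the corollary, so nothing is missing.
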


Observe that a local notion of approximation error 
\begin{equation}
    \left\lvert \varepsilon_T \right\rvert = \left\lvert{U}_T - \hat{U}_T\right\rvert
\end{equation}
is implicit in the computation of \eqref{eq:computable_refinement_indicator}, even though the exact averages ${U}_T$ are of course unavailable. We must therefore substitute approximations to compensate for this general limitation. For unsteady problems, the question is whether an estimate of the approximation error at the current time step or at a future time step offers greater utility. In the former case, we might estimate $\left\lvert\varepsilon_T\right\rvert$ by
\begin{equation} 
\left\lvert\varepsilon_T\right\rvert \approx \int_{T} \left\lvert\mathbb{B}^{\mathrm{H}}_{U(\mathcal{T})}(T; \xi) - \mathbb{B}^{\mathrm{L}}_{U(\mathcal{T})}(T; \xi)\right\rvert \mu\left( \mathbf{y}\left(\boldsymbol{\xi}\right) \right)\,d\boldsymbol{\xi} 
\end{equation}
as a measure of the local resolution via the variation between two reconstructions, e.g., one of high-order $\left( \mathbb{B}^{\mathrm{H}} \right)$ and one of low-order $\left( \mathbb{B}^{\mathrm{L}} \right)$. Depending on the choice of $\mathrm{H}$ and $\mathrm{L}$ relative to the computation and evolution of the cell-averaged system \eqref{eq:sfv_ODE}, the same asymptotic behavior as the error may be lost, potentially resulting in misplaced refinement. In other words, if $\mathbb{B}^{\mathrm{H}}$ and $\mathbb{B}^{\mathrm{L}}$ are too low-order to capture the character of the true approximation error,  the error indicators may lag the reduction in error, driving more refinements than necessary. However, for hyperbolic problems, high rates of convergence may generally be obtained only locally as a result of shock or discontinuity formations, so that coarser estimators may be preferred or at least not precluded.

Note that for conservative reconstructions, taking instead $\left\lVert\varepsilon_T\right\rVert$ as 
\begin{equation}
  \left\lvert \int_{T} \left( \mathbb{B}^{\mathrm{H}}_{U(\mathcal{T})}(T; \xi) - \mathbb{B}^{\mathrm{L}}_{U(\mathcal{T})}(T; \xi)\right ) \mu\left( \mathbf{y}\left(\boldsymbol{\xi}\right) \right)\,d\boldsymbol{\xi}\right\rvert
\end{equation}
is non-informative (zero) regardless of $\mathrm{H}$ and $\mathrm{L}$.

Now, consider the latter approach of error approximation at a future time step. Keeping in mind the method of lines framework of the cell-averaged system \eqref{eq:sfv_ODE}, denote by $\mathbb{K}(\cdot)$ the action of a generic time integrator. To simplify notation, we use $U_{n}$ to denote the solution at time $t_n$. Applying $\mathbb{K}$ yields the approximate solution at $t_{n+1}$, i.e.,
\begin{equation}\label{eq:time_integrator}
    U_{n+1}\leftarrow \mathbb{K}\left(U_n\right),
\end{equation}
explicitly or implicitly. According to this perspective, we may readily choose
\begin{equation}\label{eq:partial_step_error}
    \left\lVert\varepsilon_T\right\rVert \approx \left\lVert U^{\mathrm{H}}_{n+1}(T) - U^{\mathrm{L}}_{n+1}(T) \right\rVert
\end{equation}
for computing the refinement indicator in \eqref{eq:computable_refinement_indicator}. Nothing prohibits choosing $\mathbb{K}$ distinct from the evolution of the cell-averaged ODE system $\eqref{eq:sfv_ODE}$. In fact, for strong stability preserving (SSP) schemes \cite{ketcheson2008}, which deploy a convex combination of the first-order Euler steps, we may estimate the error after the first step in the higher-order scheme. In this sense, we have an embedded estimator.

\begin{proposition}\label{prop:error_bound}
Let $\mathbf{u}$ satisfy the conditions of Thm. \ref{thm:solution_L1} and Thm. \ref{thm:expectation_L1}. Furthermore, let $\mathbf{F}(\mathbf{u};\, \mathbf{y})$ be Lipschitz for all admissible $\mathbf{y}$, i.e., 
\begin{equation}
    \left\lvert \mathbf{F}(\mathbf{u}^L;\, \mathbf{y}) - \mathbf{F}(\mathbf{u}^H;\, \mathbf{y}) \right\rvert \le C_{\mathbf{F}}\left\lvert \mathbf{u}^L - \mathbf{u}^H\right\rvert.
\end{equation}

Suppose $\mathbb{K}$ represents the first-order Euler step
\begin{equation}
    \mathbf{U}_{n+1}(T) \leftarrow \mathbf{U}_n(T) - \frac{\Delta_t}{h_T}\int_{\partial T_x} \int_{T_y} \left( \mathbf{F}(\mathbf{u}, \mathbf{y})\cdot\hat{\mathbf{n}}\right) \mu(\mathbf{y})\, d\mathbf{x}d\mathbf{y},
\end{equation}
and the local approximation error is evaluated by
$\left\lvert\varepsilon_T\right\rvert \approx \left\lvert \mathbb{K}(\mathbf{U}^H_n(T)) - \mathbb{K}(\mathbf{U}^L_n(T)) \right\rvert$.
Then, the refinement indicator \eqref{eq:computable_refinement_indicator} on cell $T$ satisfies
\begin{equation*}
    \eta_T \le C\Delta_t \left\lvert T_y \right\rvert \left\lvert \partial T_x \right\rvert(\Delta \boldsymbol{\xi})^{\mathrm{L}+1},\quad C > 0,
\end{equation*}
where the constant $C$ depends on the Lipschitz condition on $\mathbf{F}$, $C_{\mathrm{ref}}$ in \eqref{eq:computable_refinement_indicator}, and the reconstruction orders \eqref{eq:reconstruction_order}, but not the measure $h_T$.
\begin{proof}
By the definition of $\eta_T$,
\begin{align}
    \eta_T &= C_{ref}\Delta_t \left\lvert \int_{\partial T_x}\int_{T_y} \left[ \mathbf{F}(\mathbf{u}^L;\,\mathbf{y}) - \mathbf{F}(\mathbf{u}^H;\,\mathbf{y}) \right]\cdot\hat{\mathbf{n}}\mu(\mathbf{y})\,d\mathbf{x}d\mathbf{y} \right\rvert \\
    &\le C_{ref}\Delta_t \int_{\partial T_x} \int_{y} \left\lvert \mathbf{F}(\mathbf{u}^L;\,\mathbf{y}) - \mathbf{F}(\mathbf{u}^H;\,\mathbf{y}) \right\rvert \mu(\mathbf{y})\, d\mathbf{x}d\mathbf{y} \\
    &\le C' \Delta_t \int_{\partial T_x} \int_{T_y} \left\lvert \mathbf{u}^L - \mathbf{u}^H \right\rvert \mu(\mathbf{y})\, d\mathbf{x}d\mathbf{y},
\end{align} where we applied the Lipschitz condition on $\mathbf{F},$ merging constants.

Notice the point values $\mathbf{u}^{\mathrm{L}}$ and $\mathbf{u}^{\mathrm{H}}$ are related via their respective reconstructions, $\mathbb{B}^{\mathrm{L}}$ and $\mathbb{B}^{\mathrm{H}}$, through \eqref{eq:reconstruction_diff}. Hence,
\begin{equation}
    \eta_T \le C\Delta_t \left\lvert T_y \right\rvert \left\lvert \partial T_x \right\rvert(\Delta \boldsymbol{\xi})^{\mathrm{L}+1}.
\end{equation}
\end{proof}
\end{proposition}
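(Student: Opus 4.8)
The plan is to unwind the definition \eqref{eq:computable_refinement_indicator} of $\eta_T$ together with the prescribed error surrogate $|\varepsilon_T| \approx |\mathbb{K}(\mathbf{U}^H_n(T)) - \mathbb{K}(\mathbf{U}^L_n(T))|$, and then chain three estimates: an exact cancellation coming from the Euler step, the Lipschitz bound on $\mathbf{F}$, and the reconstruction-gap bound \eqref{eq:reconstruction_diff}.

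First I would substitute the explicit form of $\mathbb{K}$ into the difference $\mathbb{K}(\mathbf{U}^H_n(T)) - \mathbb{K}(\mathbf{U}^L_n(T))$. The crucial observation is that the $\mathrm{H}$ and $\mathrm{L}$ labels refer to the reconstruction order used to evaluate the boundary flux, so both steps begin from the same cell average $\mathbf{U}_n(T)$; the leading $\mathbf{U}_n(T)$ terms therefore cancel exactly, leaving only the difference of the two flux integrals. Equally important, the explicit factor $h_T^{-1}$ in the Euler step cancels the factor $h_T$ carried by $\eta_T = C_{\mathrm{ref}} h_T |\varepsilon_T|$. This cancellation is what renders the final constant independent of the cell measure $h_T$, which is precisely the content of the last clause of the statement. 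After this step one is left with
\[\eta_T = C_{\mathrm{ref}}\Delta_t \left| \int_{\partial T_x}\int_{T_y} \left[ \mathbf{F}(\mathbf{u}^L;\mathbf{y}) - \mathbf{F}(\mathbf{u}^H;\mathbf{y}) \right]\cdot\hat{\mathbf{n}}\,\mu(\mathbf{y})\,d\mathbf{x}d\mathbf{y} \right|.\]

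Next I would push the absolute value inside the integral by the triangle inequality, bounding $|\hat{\mathbf{n}}| = 1$, and apply the Lipschitz hypothesis on $\mathbf{F}$ to replace $|\mathbf{F}(\mathbf{u}^L;\mathbf{y}) - \mathbf{F}(\mathbf{u}^H;\mathbf{y})|$ by $C_{\mathbf{F}}|\mathbf{u}^L - \mathbf{u}^H|$, absorbing $C_{\mathrm{ref}}C_{\mathbf{F}}$ into a single constant. Then, since $\mathbf{u}^L$ and $\mathbf{u}^H$ are exactly the point values produced by $\mathbb{B}^{\mathrm{L}}$ and $\mathbb{B}^{\mathrm{H}}$, the reconstruction-gap estimate \eqref{eq:reconstruction_diff} supplies $|\mathbf{u}^L - \mathbf{u}^H| \le C(\Delta\boldsymbol{\xi})^{\mathrm{L}+1}$ pointwise on $T$. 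Pulling this (fixed over the cell) bound out of the integral reduces the problem to evaluating $\int_{\partial T_x}\int_{T_y}\mu(\mathbf{y})\,d\mathbf{x}d\mathbf{y}$, which factorizes as $|\partial T_x|\,|T_y|$ because $\mu$ depends on $\mathbf{y}$ alone. Collecting the constants yields the claimed bound $\eta_T \le C\Delta_t |T_y||\partial T_x|(\Delta\boldsymbol{\xi})^{\mathrm{L}+1}$.

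The calculation is essentially a routine chaining of inequalities, and I do not anticipate a genuine analytic obstacle. The point demanding the most care---and where a hasty reading could go astray---is the bookkeeping in the first step: one must recognize that the two Euler steps share the same starting average, so that the $\mathbf{U}_n(T)$ contribution drops out, and that the $h_T$ cancellation is exact rather than merely asymptotic. A secondary subtlety is that \eqref{eq:reconstruction_diff} is a \emph{pointwise} bound on the reconstruction difference, so extracting $(\Delta\boldsymbol{\xi})^{\mathrm{L}+1}$ from the integral is legitimate only after observing that the cell diameter $\Delta\boldsymbol{\xi}$ is constant over $T$; this is exactly what lets the surface-times-volume factor $|\partial T_x|\,|T_y|$ emerge cleanly.
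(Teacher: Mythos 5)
Your proposal is correct and follows essentially the same route as the paper's proof: cancel the common $\mathbf{U}_n(T)$ terms and the $h_T$ factors in the Euler-step difference, apply the triangle inequality and the Lipschitz bound on $\mathbf{F}$, then invoke the reconstruction-gap estimate \eqref{eq:reconstruction_diff} and factor the remaining integral into $\left\lvert \partial T_x\right\rvert\left\lvert T_y\right\rvert$. In fact you make explicit two points the paper leaves implicit---that both Euler steps share the same starting average so the leading terms drop out, and that the $h_T$ cancellation is exact, which is what makes the final constant independent of the cell measure.
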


\subsection{Example: Transport Equation}\label{sec:ex:transport}
To illustrate the convergence results of Thm. \ref{thm:pdf_error_convergence} and Thm. \ref{thm:CDF_convergence_SFV}, consider the transport equation of \eqref{eq:stochastic_cons_law_param} with deterministic flux \eqref{eq:advection} and uncertain initial conditions, given by
\begin{align}\label{eq:ex:example_advection_problem}
&{u}_t + a u_x = 0, \quad {x}\in\Omega_{\mathrm{phys}}\equiv\left[0, \,1 \right],\, {y}\in D_{\mathrm{stoch}}\equiv\left[ 0,\,1 \right],\\ 
 &{u}({x},0,{y}) = \sin(4\pi x)\sin(4\pi y), \quad \mathbf{x}\in \Omega_{\mathrm{phys}},\; \mathbf{y}\in D_{\mathrm{stoch}},
\end{align}
under periodic boundary conditions, with $a=1$. According to the \textit{method of characteristics} \cite{toro2009}, the solution $u$ is
\begin{equation}\label{eq:ex:solution}
    u(x,\,t,\,y) = \sin\left(4\pi(x - at) \right)\sin(4\pi y),
\end{equation}
which we employ to examine the approximate quantities fielded by the SFV scheme applied to high order finite volume schemes in the spatial variables. Specifically, the discretization leverages a fifth-order WENO scheme, i.e., $r=\ell=5$.

 \begin{figure}[t]
        \centering
       \begin{subfigure}[b]{0.48\linewidth}{\begin{center}\includegraphics{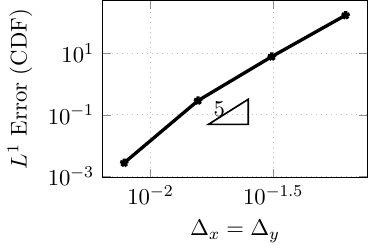}
\end{center}}
\caption{}
\end{subfigure}
       \begin{subfigure}[b]{0.48\linewidth}{\begin{center}\includegraphics{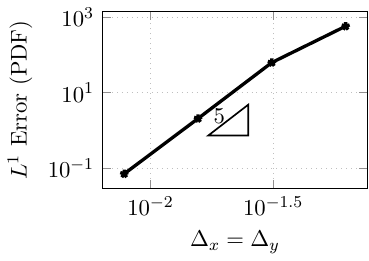}
\end{center}}
        \caption{}
\end{subfigure}
\caption{Numerical confirmation of the \textit{a priori} convergence rates of Thms \ref{thm:pdf_error_convergence} and \ref{thm:CDF_convergence_SFV} for the transport problem in Section \ref{sec:ex:transport}. (a) CDF. (b) PDF. Neither quantities are normalized.}
        \label{fig:ex:advection_convergence}
\end{figure}

As shown in Fig. \ref{fig:ex:advection_convergence}, we obtain the convergence rates implied by Thms. \ref{thm:pdf_error_convergence} and \ref{thm:CDF_convergence_SFV}, resulting in a a quintic relationship between the $L^1$ errors and the discretization size $\Delta_x$ = $\Delta_y$.

\subsection{Refinement}\label{sec:refinement}
In the conventional adaptivity workflow
$$\boxed{\mathrm{SOLVE}}\rightarrow\boxed{\mathrm{ESTIMATE}}\rightarrow\boxed{\mathrm{MARK}}\rightarrow\boxed{\mathrm{REFINE}}, $$
the indicators $\eta_i$ constitute the $\boxed{\mathrm{ESTIMATE}}$ stage, serving as the ingredient to the $\boxed{\mathrm{MARK}}$ process. As further discussed in Section \ref{sec:equilibration}, any marking scheme selects a subset $\mathcal{M}\subset\mathcal{T}$ according to a variety of criteria, e.g., bulk fractions, global tolerances, or local tolerances. In many instances, $\mathcal{M}$ is augmented by a small, auxiliary set of cells that would otherwise not require refinement in order to satisfy the constraints outlined in Section \ref{sec:refinement_framework}.

Implicitly, this $\boxed{\mathrm{MARK}}$ step includes an underlying decision process that determines \textit{how} to refine, in addition to \textit{what} to refine. Fundamentally, the choices available to drive error control in the physical and stochastic spaces rests upon the fidelity and efficiency of the function space $\mathbb{V}$. Enrichment, for example, of the local function spaces in a DG ansatz (so-called $p$-refinement), and enrichment of the spatial resolution ($h$-refinement) constitute the two primary approaches for improving discretization effectiveness. Here we aim to study $h$-refinement modalities under the piecewise constant ansatz of the preceding sections. Constrained to isotropic refinements, error indicators, which ideally link some estimate of discretization error to the actual quality of output functionals and quantities, e.g., the expectation or variance of some component of the solution, provide sufficient decision information. In other words, under simpler models of adaptivity, knowledge of \textit{what} to refine implies \textit{how} to refine.

On the other hand, scalability to higher-dimensions, and demands for greater computational efficiency, motivate more sophisticated refinement structures. Anisotropy in refinements, with adaptivity not only of where to refine but also in the direction of refinements, facilitates a more focused insertion of new DoFs. In multiple dimensions, where certain parameters in a given space-time control volume exhibit varying levels of influence on the approximation quality, isotropic refinements generate impractically large numbers of new DoFs. For a $d\times p$-dimensional computational space, a single isotropic refinement generates $2^{d\times p}$ new control volumes, resulting in a net change of at least $2^{d\times q} - 1$ DoFs. With full anisotropy, we may induce a net change as low as a single DoF regardless of the total dimension of the computational space.

Even so, isolated refinement indicators that provide only some metric of importance or local contributions to the error do not provide a basis for anisotropy decisions. Instead, we seek auxiliary methods of categorization, employing the notions of directionality and smoothness to determine---when \textit{what} to refine is insufficiently informative---in which directions we need to insert new DoFs. Compelling choices include decay rate estimation of Fourier or Legendre expansions \cite{fehling2020, harmon2022}, and directionality of cell boundary error contribution estimates \cite{corrado2022}. We opt to instead consider the following approach.

Let $\mathbb{B}^{1\mathrm{D}}_{U(\mathcal{T})}(T;\, \xi,\, \mathcal{O}): \mathbb{V}(\mathcal{T})\rightarrow \mathbb{R}$ denote a one-dimensional reconstruction along the $\mathcal{O}$-coordinate direction. Suppose that $\mathbb{B}^{1\mathrm{D}}_{U(\mathcal{T})}$ is an $\ell$th degree reconstruction. We estimate from $\mathbb{B}^{1\mathrm{D}}_{U(\mathcal{T})}$ the smoothness in each direction via
\begin{equation}\label{eq:smoothness_indicator}
    \beta_{\mathcal{O}} = \sum_{i = 1}^{\ell} \int_{T_{\mathcal{O}}} (\Delta \xi)^{2i - 1} \left(  \frac{\partial^i \mathbb{B}^{1\mathrm{D}}_{U(\mathcal{T})}(T;\, \xi,\, \mathcal{O})}{\partial^i \xi}  \right)^2\, d\xi,
\end{equation}
where $T_{\mathcal{O}}$ denotes a one-dimensional slice of $T$ in the $\mathcal{O}$-direction. See \cite{jiang1996} for a rationalization for such a smoothness indication structure. Naturally, different realizations of $\mathbb{B}^{1\mathrm{D}}_{U(\mathcal{T})}$ will yield different smoothness indications. Taking a quadratic ENO central stencil interpretation, for instance, generates 
\begin{equation}
    \beta_{\mathcal{O}} = \frac{13}{12}\left(U\left(T_{\mathcal{O}-1}\right) - 2U\left(T\right) + U\left(T_{\mathcal{O}+1}\right)\right)^2 + \frac14\left(U\left(T_{\mathcal{O}-1}\right) - U\left(T_{\mathcal{O}+1}\right)\right)^2,
\end{equation}
where $T_{\mathcal{O}-1}$ and $T_{\mathcal{O}+1}$ are the left and right neighbors of $T$ in the $\mathcal{O}$-direction, respectively. This interpretation matches the perspective, if not the goal, in selecting the smoothest stencil in ENO reconstruction or producing a convex combination of ENO stencils as in WENO. Regardless of the exact form of the smoothness estimator, we assume $\beta_{\mathcal{O}}$ is non-negative, and, without loss of generality, that large values imply local non-smoothness and therefore indicate the need to allocate more DoFs in that direction.

Under such conditions, we propose a bulk criterion for the anisotropy decision. Namely, when 
\begin{equation}\label{eq:anisotropy_decision} 
\beta_{\mathcal{O}} > \varepsilon_{\mathrm{aniso}} \sum^{d+q}_{i=1} \beta_{i},\quad \varepsilon_{\mathrm{aniso}} < \frac{\max_i{\beta_i}}{\sum^{d+q}_{i=1} \beta_{i}} 
\end{equation} 
for any permissible $\mathcal{O}\in \left\{1,\dots,\,d+q\right\}$ (physical or stochastic), we perform a bisection in that direction. Values of $\varepsilon_{\mathrm{aniso}}$ near zero (and below) tend toward isotropic refinements, while values near the upper-bound heavily prefer anisotropic refinements. Depending on the choice of $\varepsilon_{\mathrm{aniso}},$ therefore, dominant directions of non-smoothness will see greater expansions in the number of DoFs compared with directions of smooth variation. Note that, as in the expansion of $\mathcal{M}$ to satisfy the constraints of Section \ref{sec:refinement_framework}, interactions between neighboring cells may also drive expanded subdivisions in other directions as necessary.

After insertion of new DoFs, we perform a conservative interpolation as described in Def. \ref{def:cons_interp}, to assign updated values. We employ the interpolant \eqref{eq:interpolant} in the numerical examples of Section \ref{sec:numerical_results}.

\subsection{Coarsening}\label{sec:coarsening}
    Though conceptually inverse to the problem of refinement, effective coarsening poses significant additional methodological challenges. Whereas refinement attempts to drive error reduction, coarsening strives to increase computational efficiency without deteriorating approximation quality. As an illustration of this dichotomy, consider the tracking of a shock front during the evolution of an underlying PDE. Where refinements capture the behavior of the shocked flows, coarsening enables an unwinding of high resolution when the non-smooth, difficult to capture flows have entered other regions or have left the computational domain. Retaining DoFs in such cases burdens the spatial and temporal discretization procedures without benefit to the quality of output functionals.
    
    Recall Fig. \ref{fig:example_discretizations} and the corresponding constraints on the set of permissible discretizations $\mathbb{T}$ outlined in Section \ref{sec:refinement_framework}. When executing refinements according to the adaptivity workflow in Section \ref{sec:refinement}, ensuring that constraints are satisfied will in many cases require additional refinements of neighboring cells when executing an instruction that would otherwise generate some $\mathcal{T}\notin \mathbb{T}$. Like refinement, coarsening engages a semi-local procedure by scanning neighbors. Importantly, however, coarsening considers the \textit{future} state of the neighboring cells rather than the current state as in the case of refinement. 
    
     Due to the hierarchical, forest mesh structure of our discretization framework, coarsening here is \textit{exclusively} an unwinding procedure, meaning that only arrangements that were possible with the existing forest are permitted. Cells that are not siblings may not merge together as this would disrupt the trees spawned from the root cells, complicating conservation and mapping between function spaces.
     
     Assuming, then, that a cell $T\in\mathcal{T}$ is marked for coarsening based on the supplied criteria,
     proceeding with coarsening requires first checking that its sibling cells also request coarsening. Afterwards, compatibility checks with the future states of any other $T'\in\mathcal{T}$ such that $f_y\in\mathcal{F}_y(T)$ and $f'_y\in\mathcal{F}_y(T')$ intersect with non-zero measure will either cancel the decision to coarsen or allow coarsening to proceed. Given the possibility of propagation in these cancellation instructions, the procedure is necessarily iterative.
     
     Now, as a final step in executing coarsening instructions, we map the data on the refined siblings to their immediate parent via a conservative projection as described in Def. \ref{def:cons_projection}, reducing the local resolution as desired without upsetting expected solution properties.
     
     We note that coarsening, overall, aims to encourage \textit{equilibration}, a condition of optimality for numerical discretizations \cite{chen1994}. We further discuss this property and its role in designing adaptive schemes in Section \ref{sec:equilibration}.
    
\subsection{Equilibration}
\label{sec:equilibration}
Conceptually, equilibration implies that no region of the discretization is significantly higher or lower in effective resolution than any other region. If such a situation occurs, reallocation of DoFs from an inefficient region (over-refined) to an insufficient region (under-refined) will yield an enhancement to efficiency. Depending on the goal of a simulation, e.g., computing the expectation at a single point in space, or some global quantity, the quantification of ``effective resolution" will change.

The promotion of this property heavily influences the adaptivity scheme. Take, for example, the conventional Dörfler marking \cite{dorfler1996}:

\textit{Determine a set $\mathcal{M}\subset{\mathcal{T}}$ of minimal cardinality such that}
\begin{equation}
 \theta\sum_{T\in\mathcal{T}} \eta_T^2 \le \sum_{T\in\mathcal{M}} \eta_T^2,\quad 0 < \theta < 1.
\end{equation}
Dörfler marking selects a fraction $\theta$ of cells for refinement and applies an iterative framework to drive numerical error down in an order implied by the rankings of $\eta_T$. Even so, if we assume equality of each $\eta_T$, the Dörfler strategy immediately disrupts equilibration.

Instead, suppose that an oracle has determined a fixed local tolerance $\mathrm{TOL}$ that achieves a desired error quantity. Based on this local tolerance, in each iteration we drive each $\eta_T$ towards $\mathrm{TOL}$, unconditionally when $\eta_T > \mathrm{TOL}$, and if $\eta_T < \mathrm{TOL}$, then only when predicted that the tolerance will still be met after coarsening. 

Given the deleterious impacts of over-coarsening as opposed to over-refinement, we perform coarsening only when $\eta_T < \theta \mathrm{TOL},$ for $0 < \theta < 1$ typically in the neighborhood of $\theta=0.1$, i.e., that the refinement indicator is a tenth of the local threshold.
\subsection{The Unsteady Enriched-Reduced $\mathbb{B}$ Predictor-Corrector Scheme}\label{sec:predictor_corrector}
Encapsulating the contributions of Sections \ref{sec:convergence} through \ref{sec:equilibration}, we present a predictor-corrector adaptive method based on an enriched-reduced $\mathbb{B}$ pair for unsteady problems.
 \begin{algorithm}
\caption{Unsteady Enriched-Reduced $\mathbb{B}$ Adaptivity}
\label{alg:adaptivity}
\begin{algorithmic}[1]
\STATE{Define \{$\varepsilon, \, \varepsilon_{\mathrm{aniso}},\, t_n,\, \mathbb{B}^{\mathrm{H}},\, \mathbb{B}^{\mathrm{L}},\, \mathbb{K},\,\theta,\, \mathcal{T}$\}}
\STATE{${\eta} := \infty$}
\WHILE{${\eta} > \varepsilon$}
\STATE{${\eta} = 0$}
\FORALL{$T\in\mathcal{T}$}
\STATE\label{line3}{Perform reconstruction according to \eqref{eq:reconstruction_order} and flux integration for an enriched-reduced reconstruction pair $\mathbb{B}^{\mathrm{H}}$ and $\mathbb{B}^{\mathrm{L}}$ of order $\mathrm{H}$ and $\mathrm{L}$, respectively}
\STATE{Compute $\mathbb{K}(\mathbf{U}^{\mathrm{H}}_n(T)) - \mathbb{K}(\mathbf{U}^{\mathrm{L}}_n(T))$ via $\mathbb{K}$ as in \eqref{eq:partial_step_error} and Prop. \ref{prop:error_bound}}
\STATE{Determine $\eta_T$ from \eqref{eq:computable_refinement_indicator}}
\IF{$\eta_T > \varepsilon$}
\STATE{Compute smoothness indicators $\eta_i^{\mathrm{aniso}}$ via \eqref{eq:smoothness_indicator}}
\STATE{Assign refinement directions via \eqref{eq:anisotropy_decision} with $\varepsilon_{\mathrm{aniso}}$}
\STATE{Stage refinement instruction for $T$}
\ELSIF{$\eta_T < \theta \varepsilon$}
\STATE{Stage coarsening instruction for $T$ to its immediate ancestor}
\ENDIF
\STATE{${\eta} = \max{\left( \eta,\, \eta_T \right)}$}
\ENDFOR
\STATE{Amend coarsening and refinement instructions to satisfy the constraints of Section \ref{sec:refinement_framework}}
\STATE{Execute instructions, updating $\mathcal{T}$}
\ENDWHILE
\STATE{$t_{n+1}\leftarrow t_{n}$}
\RETURN $\max_{T\in\mathcal{T}}{\eta_T}$
\end{algorithmic}
\end{algorithm}

\begin{remark}
Algorithm \ref{alg:adaptivity} includes localized refinements in the combined physical and stochastic computational space \textit{and} attribution in particular directions. For the conventional case of isotropic refinements, however localized to regions of the physical and stochastic spaces, letting $\varepsilon_{\mathrm{aniso}} \le 0$ suffices.
\end{remark}

For a particular realization of Algorithm \ref{alg:adaptivity}, we take $\mathbb{B}^{\mathrm{H}}$ as a piecewise-quadratic fifth-order accurate WENO reconstruction, $\mathbb{B}^{\mathrm{L}}$ as a piecewise-linear third-order accurate WENO reconstruction, and $\mathbb{K}$ denotes a first-order Euler time integration step. The coarsening bulk factor $\theta$ is, at least heuristically, related to efficiency and equilibration goals as discussed in Section \ref{sec:coarsening} and Section \ref{sec:equilibration}. In the Numerical Results section, we exclusively employ $\theta = 0.1$. The error tolerances $\varepsilon$ and $\varepsilon_{\mathrm{aniso}}$ are chosen according to accuracy requirements, with smaller values of both driving greater allocations of DoFs.

\begin{remark}
The choice of $\mathbb{K}$ is potentially disjoint from the time integrator employed to update the solution at $t_n$ to that at $t_{n+1}$. In other words, choosing a first-order integrator for refinement indication does not preclude a higher-order update for the solution. In the following section, we employ a third-order SSP-RK integrator for this update. The first-order time-integrator $\mathbb{K}$ is extracted from the higher-order time integrator, which is performed only partially until satisfaction of the refinement criteria. 
\end{remark}

\section{Numerical Results}
\label{sec:numerical_results}
We apply the proposed scheme to the stochastic Burgers' equation and the stochastic Euler equations subject to random initial conditions. The procedure for modeling other sources of uncertainty, such as in boundary conditions, fluxes, EOS, etc., is unchanged from the following presentation. 

Under the equilibration strategy of Algorithm \ref{alg:adaptivity}, we assign various tolerances $\varepsilon$ for driving adaption of the discretization, with associated anisotropic tolerances $\varepsilon_{\mathrm{aniso}}$. To reduce discretization error in the physical and stochastic spaces, we refine anisotropically according to the directionality of non-smoothness indicated by \eqref{eq:smoothness_indicator}. At each time step $t_n$, we perform error estimation via the enriched-reduced pair according to an embedded explicit Euler step, where satisfaction of the local error tolerance engages the remainder of the high-order SSP scheme to advance the time-step to $t_{n+1}$. Otherwise, the discretization is refined based on the local error indicators, and a new time step is performed starting from the state at $t_n$ mapped to the new discretization. We summarized the details of this procedure in Algorithm \ref{alg:adaptivity}.

 \begin{table}[htb]
 \centering
 \begin{tabular}{c||c|c}
      Index & $\varepsilon/\Delta_t$ & $\varepsilon_{\mathrm{aniso}}$ \\
      \hline
      0 & $5\times 10^{-4}$ & 0.5 \\
      1 & $2\times 10^{-4}$ & 0.5 \\
      2 & $5\times 10^{-5}$ & 0.5 \\
      3 & $1\times 10^{-5}$ & 0.5 \\
 \end{tabular}
 \caption{The four tolerance-pairs tested for the Burgers' equation problem of Section \ref{sec:burger_1D}.}
 \label{tab:tolerances}
\end{table}

To quantify this performance, and the enhanced convergence rates afforded by the proposed adaptivity, we perform a series of tests for the choices in Table \ref{tab:tolerances}. Typical computations in practical applications, e.g., via MCS, prioritize low order stochastic moments. In the SFV framework, these quantities amount to integration of piecewise polynomials weighted by the probability density functions of the underlying uncertainty. Secondly, with robust approximations of push-forward probability densities, we may recover additional statistics disjoint from the SFV approximation. For this, we follow the procedure in Thm. \ref{thm:pdf_error_convergence}, which amounts to sampling a piecewise polynomial response. We evaluate the convergence of these quantities globally in the $L^1$ sense by comparing the performance with respect to a high-resolution reference computation.  Specifically, the errors in expectation, variance, and distribution are evaluated as
\begin{align}\label{eq:l1_numerical_result_postproc}
    \left\lVert \mathbb{E}[\bar{u}] - \mathbb{E}[u] \right\rVert_{L^1} &= \int_{\Omega_{\mathrm{phys}}} \left\lvert \mathbb{E}[\bar{u}(x)] - \mathbb{E}[u(x)] \right\rvert\, dx,\\
    \left\lVert \mathrm{Var}[\bar{u}] - \mathrm{Var}[u] \right\rVert_{L^1} &= \int_{\Omega_{\mathrm{phys}}} \left\lvert \mathrm{Var}[\bar{u}(x)] - \mathrm{Var}[u(x)] \right\rvert\, dx,\\
    \label{eq:l1_numerical_result_postproc_3}
    \left\lVert \mathrm{PDF}[\bar{u}] - \mathrm{PDF}[u] \right\rVert_{L^1} &= \int_{\Omega_{\mathrm{phys}}} \int^{\infty}_{-\infty} \left\lvert \mathrm{PDF}[\bar{u}(x)](v) - \mathrm{PDF}[u(x)](v) \right\rvert \, dx dv.
\end{align}
For evaluating $\eqref{eq:l1_numerical_result_postproc}$, as in proof of Thm. \ref{thm:CDF_convergence_SFV}, we identify an index set $\mathcal{I}(x)$ of cells $T\in\mathcal{T}$ for a finite set $\left\{ \mathcal{X} \right\}$ of $x\in\Omega_{\mathrm{phys}}$ matching quadrature points over the subdivided physical space. Integration along the stochastic directions is performed on a cell-by-cell basis. Finally, while certain regions may attain high-order accuracy locally, the global quantities include multiple shocks, limiting the expected optimal global convergence rate to first-order.

\subsection{Burgers' equation with initial value uncertainty}\label{sec:burger_1D}
    Consider the Burgers' flux of \eqref{eq:burgers_flux} and the stochastic PDE of \eqref{eq:stochastic_cons_law} with the uncertainty parameterized by a random variable $\mathbf{y}$ as in \eqref{eq:stochastic_cons_law_param}. We construct a coarse initial discretization of the computational space $\Omega_{\mathrm{phys}}\times D_{\mathrm{stoch}}$ via control volumes under a piecewise constant ansatz. In each dimension (physical and stochastic), we initialize the mesh with 16 cells, for a total of 256 DoFs. For reconstruction in the physical and stochastic spaces, we employ a piecewise quadratic and piecewise linear enriched-reduced WENO pair, which under conditions of sufficient smoothness, offer fifth-order and third-order accuracy, respectively. A low-storage explicit third-order SSP-RK method serves for time integration \cite{ketcheson2008}.

    As a particular benchmark, we study the example introduced in a recent study \cite{herty2023}, taking
 \begin{equation}\label{eq:herty_ic} 
 \mathbf{u}(\mathbf{x},0,\mathbf{y}) = \sin\left( 2\pi\mathbf{x} \right)\sin\left( 2\pi\mathbf{y}\right), \; \mathbf{x}\in[0,\,1],\quad \mathbf{y}\in D_{\mathrm{stoch}}\subset\mathbb{R},
 \end{equation}
 assuming that a sufficiently accurate bounded $D_{\mathrm{stoch}}$ can approximate uncertainties of infinite support, as described in Remark \ref{remark:unbounded}. We assume periodic boundary conditions and a deterministic flux. Though globally smooth, two stationary shocks form: one at $\mathbf{x} = 0.5, \, \mathbf{y} < 0.5$, and the other at $\mathbf{x}=0,\,\mathbf{x}=1,\, \mathbf{y} > 0.5 $, plus periodicity in $\mathbf{x}$ and $\mathbf{y}$. As in the previous study \cite{herty2023}, we take the terminal time as $t=0.35$ to permit formation of these two stationary shocks.

\begin{figure}[!htb]
        \centering
       \begin{subfigure}[b]{0.40\linewidth}
       {
       \begin{center}
        \hspace{0.75cm}
        \includegraphics{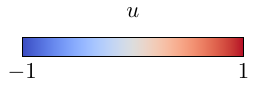}\\
\end{center}
\begin{center}
      \includegraphics{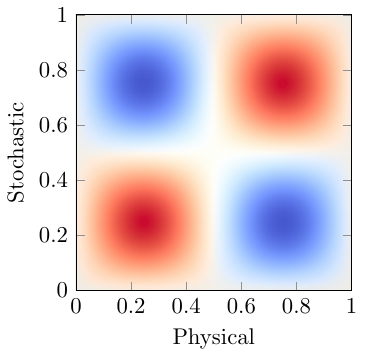}
\end{center}
}
        \caption{}
        \label{fig:sub:burger_initial}
\end{subfigure}
\begin{subfigure}[b]{0.40\linewidth}{
\begin{center}
    \hspace{0.75cm}
             \includegraphics{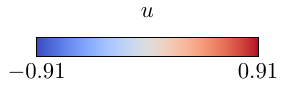}\\
\end{center} 
\begin{center}
       \includegraphics{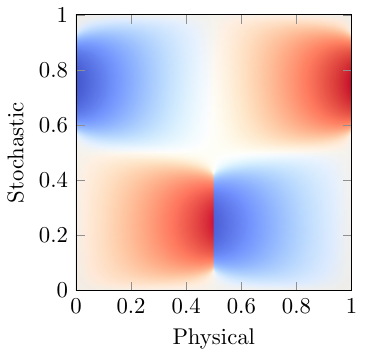}
\end{center}}
        \caption{}
        \label{fig:sub:burger_final}
\end{subfigure}
\caption{Initial and terminal states for the Burgers' equation problem of Section \ref{sec:burger_1D} with uncertain initial data. (a) The initial state for the problem, i.e., the initial condition \eqref{eq:herty_ic}. (b) The terminal state at $t=0.35$.}
        \label{fig:burgers_initial_and_final_state}
\end{figure}

 The solution at $t=0.35$, depicted in Fig. \ref{fig:sub:burger_final}, illustrates the formation of these two shocks from smooth initial conditions in Fig. \ref{fig:sub:burger_initial}. Note that, in this example, traversal of the stochastic space does not cross a shock boundary. We explore such cases in Section \ref{sec:euler_1D}.
 
 We further suppose an oracle supplies an expression of the random variable $\mathbf{y}$, namely $\mathbf{y}_1\sim \mathcal{B}(2,5).$ While the expression of the uncertainty impacts the \textit{results} of adaptivity, the \textit{procedure} is entirely agnostic to this choice. The reference solution is obtained via fine tolerance adaptive simulation with resolutions equivalent to uniform discretizations of approximately 70 million DoFs.
 
 Starting with a uniform discretization of $16$ cells in each dimension, we consider the four pairs of tolerances $\left\{ \varepsilon,\, \varepsilon_{\mathrm{aniso}}\right\}$ indicated in Table \ref{tab:tolerances}.

The remaining instruments of Algorithm \ref{alg:adaptivity} are as described in Section \ref{sec:predictor_corrector}. Note that the listed tolerances are normalized with respect to the temporal discretization, leading to the interpretation of $\varepsilon$ as the space-time control volume error contribution tolerance. For numerical integration, we employ Gauss-Lobatto quadrature rules of sufficiently high order to preserve asymptotic characteristics. Finally, the necessary WENO weights for evaluating reconstructions $\mathbb{B}^{\mathrm{H}}$ and $\mathbb{B}^{\mathrm{L}}$ may be pre-computed for reference cells analogously to the procedure previously outlined for Gauss-Legendre rules \cite{titarev2004}.

With the developed SFV scheme, we may easily query statistical moments, point values, and other quantities and functionals of the solution to the stochastic IBVP parameterized by random variables. For instance, we illustrate in Fig. \ref{fig:burgers_1D_solution_plots} the solution behavior as described by the first two moments. Alongside depictions of the perturbations about the mean with the computed standard deviation, we include a confidence region defined by the first and third quartiles. Note that the probability density of $u$ at each position $x$ is not Gaussian. In particular, the results induced by the choice of $\mathbf{y}_1\sim\mathcal{B}(2,\,5)$ in Fig. \ref{fig:burgers_1D_solution_plots}, reflect a significant bias in the output solution. As a result of the underlying distribution's emphasis on the central shocked region of the computational domain, we see much larger expected values in the neighborhood of $x = 0.5$ compared to what would occur for more uniform distributions. Furthermore, the quartiles computed from the SFV approximation underscore the sensitivity of the expectation to extreme values and the merits of more complete pictures of stochasticity.

\begin{figure}[!tb]
    \begin{center}
        \hspace{1cm}
        \includegraphics{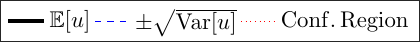}\\
    \end{center}
    \begin{center}
\includegraphics{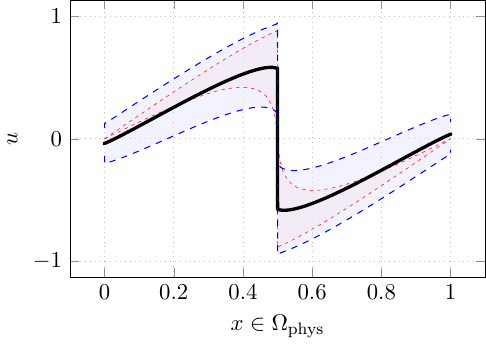}
    \end{center}
\caption{Solution characteristics of $u$ at the terminal time $t=0.35$ for the problem of Section \ref{sec:burger_1D} as described by the first stochastic moments. The red shaded region denotes the first-to-third quartile confidence region.}
        \label{fig:burgers_1D_solution_plots}
\end{figure}

According to the adaptive scheme outlined in Section \ref{sec:error_analysis}, this parametric highlighting of the stochastic space via $\mathbf{y}_1$ translates to tailored discretizations, which respond to the evolution of the PDE and the underlying stochasticity upon which the parameterization is based. Fig. \ref{fig:burgers_1D_discretizations} illustrates this characteristic. Note that shocked flows in lower probability regions, while not neglected, see fewer refinements compared to high probability flows, resulting in a concentration of DoFs. Matching the resolution attained with adaptivity would require millions of DoFs with uniform (i.e., global) refinements. Additionally, the anisotropic character of the refinements permits targeting the directionality of solution behavior, including the discontinuities located in fixed spatial coordinates.
\begin{figure}[!htb]
        \centering
       \begin{subfigure}[b]{0.35\linewidth}{
       \begin{center}
       \includegraphics{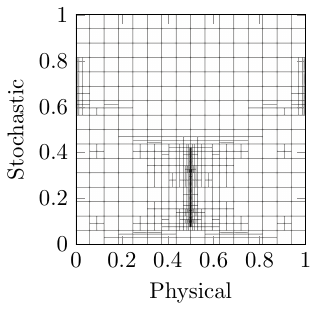}
\end{center}
       }
        \caption{}
        \label{fig:sub:burgers_discretization_beta_coarse}
\end{subfigure}
\begin{subfigure}[b]{0.35\linewidth}{
       \begin{center}\includegraphics{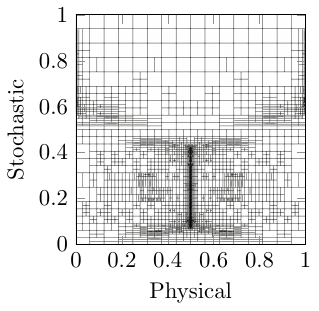}
\end{center}
}
        \caption{}
        \label{fig:sub:burgers_discretization_beta_fine}
\end{subfigure}
\caption{Refined discretizations at the terminal time $t=0.35$ for the problem of Section \ref{sec:burger_1D} with $\mathbf{y}_1\sim\mathcal{B}(2,5)$. (a) The discretizaton for the second coarsest tolerance pair in Table \ref{tab:tolerances}. (b) The discretization for the finest tolerance in Table \ref{tab:tolerances}.}
        \label{fig:burgers_1D_discretizations}
\end{figure}

 \begin{figure}[!htb]
    \centering
    \hspace{1cm}
        \includegraphics{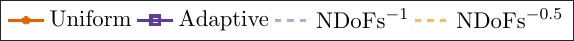}
        \\
        \begin{subfigure}[b]{0.325\linewidth}{\begin{center}\includegraphics{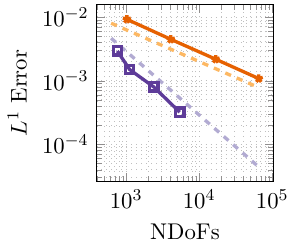}\end{center}
}
\caption{$\mathbb{E}[u] \mid \mathbf{y}_1$}
\end{subfigure}
       \begin{subfigure}[b]{0.325\linewidth}{\begin{center}\includegraphics{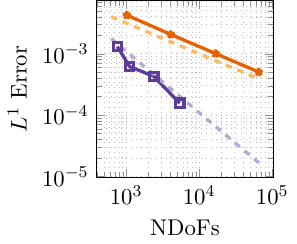}\end{center}
}
        \caption{$\mathrm{Var}[u] \mid \mathbf{y}_1$}
\end{subfigure}
       \begin{subfigure}[b]{0.325\linewidth}{\begin{center}\includegraphics{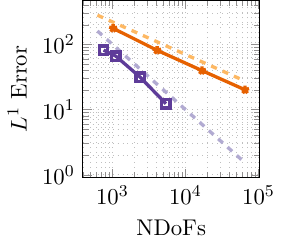}\end{center}
}
\caption{$\mathrm{PDF}[u] \mid \mathbf{y}_1$}
\end{subfigure}
\caption{$L^1$ error of the first two stochastic moments and the probability density function of $u$ at time $t=0.35$ for uniform refinements and the proposed adaptivity with $\mathbf{y}_1\sim\mathcal{B}(2,\,5)$. Dashed lines indicate convergence rates with respect to the number of degrees of freedom. Note that these quantities are not normalized.}
        \label{fig:burgers_1D_rv_uniform_convergence}
\end{figure}

Recall the three quantities of interest defined in \eqref{eq:l1_numerical_result_postproc}-\eqref{eq:l1_numerical_result_postproc_3}. Considering these objectives in sequence, we compare in Fig. \ref{fig:burgers_1D_rv_uniform_convergence} the proposed adaptivity and uniform, global refinements, which do not consider localized relationships between the physical and stochastic coordinates. Where in all tests, uniform refinement attains empirical convergence rates related to the square root of the NDoFs, the adaptive method drives linear convergence. Examining the $L^1$ norms for the expectation and the variance over the tested tolerances, we observe a 1 to 2 orders of magnitude improvement for the same NDoFs. In the case of the $L^1$ error norm for the push-forward probability density in Fig. \ref{fig:sub:burgers_1d_unif_pdf}, the enhanced convergence rate is maintained for the adaptive method. While each tolerance adapts the discretization independently, our proposed method yields highly consistent convergence behavior.

 \subsection{Euler equations with initial value uncertainty}\label{sec:euler_1D}
 Consider now Euler's flux of \eqref{eq:euler_flux}. We take the same predicted-corrector, time integrator, and tolerances as deployed in Section \ref{sec:burger_1D}.
 
 We discretize $\Omega_{\mathrm{phys}} \equiv \left[ 0,\,1 \right]$ with free flow boundary conditions, and set the temporal region of interest as $\mathbb{I}\equiv \left[0,\,0.1\right]$. The starting discretization consists of 24 cells in each dimension, or 576 DoFs.
 
 We suppose an ideal gas of internal energy
 \begin{equation}
     e = \frac{p}{(\gamma - 1)\rho},
 \end{equation}
 for a ratio of specific heats $\gamma > 1$. In particular, we take a deterministic value of $\gamma = 1.4$.
 We suppose that an oracle provides uncertain initial data of the form
 \begin{equation}\label{eq:euler_1D_ini_con}
     \mathbf{u}(\mathbf{x},\,0,\,\mathbf{y}) = \begin{cases}\left( 1,\, 0,\, 0.5 + 2.5\mathbf{y} \right), & \mathbf{x} < 0.5\\ \left( 0.125,\, 0,\, 0.25 \right), & 0.5 < \mathbf{x} < 0.75\\ \left( 0.5,\, 0,\, 0.25 + 1.25\mathbf{y}\right), &\mathbf{x} > 0.75\end{cases}, \;\mathbf{x}\in[0,\,1],\, \mathbf{y}\in D_{\mathrm{stoch}}\subset\mathbb{R},
 \end{equation}
 which is equivalent to uncertain initial pressures.   The initial condition \eqref{eq:euler_1D_ini_con} is similar to a stochastic three-state Woodward-Colella problem \cite{woodward1984}, with the terminal state of the parametric problem shown in Fig. \ref{fig:euler_1D_final_state} at $t=0.1$. As the simulation evolves, discontinuous flows collide and interact in the stochastic and physical spaces, leading to highly varied solution behavior.

  \begin{figure}[!htb]
        \centering
     \begin{subfigure}[b]{0.32\linewidth}{\begin{center}\includegraphics{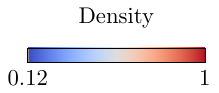}\\
\end{center}
\begin{center}
       \hspace{-0.75cm}\includegraphics{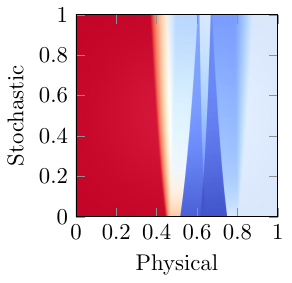}
\end{center}
      }
      \caption{$\rho$}
      \end{subfigure}
      \hfill
      \begin{subfigure}[b]{0.32\linewidth}{\begin{center}\includegraphics{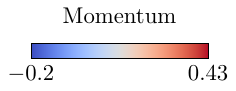}\\
\end{center}
\begin{center}\hspace{-1cm}\includegraphics{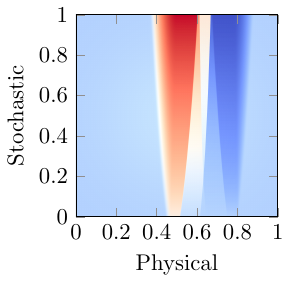}
\end{center}
}\caption{$\rho v$}\end{subfigure}
      \hfill
      \begin{subfigure}[b]{0.32\linewidth}{\begin{center}\includegraphics{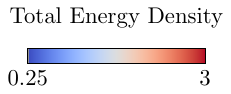}\\
\end{center}
      \begin{center} \hspace{-1cm}\includegraphics{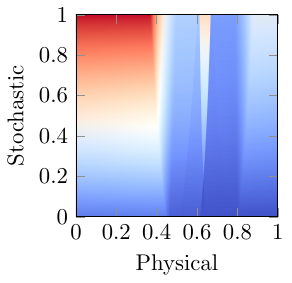}
\end{center}
}
\caption{$\rho E$}\end{subfigure}
        \caption{Terminal state at $t=0.10$ for the Euler's equation problem of Section \ref{sec:euler_1D} with uncertain initial pressures given by the initial condition \eqref{eq:euler_1D_ini_con}. (a) Density. (b) Momentum. (c) Total energy density.}
        \label{fig:euler_1D_final_state}
\end{figure}

 \begin{figure}[!htb]
        \centering
       \begin{subfigure}[b]{0.35\linewidth}{
       \begin{center}
       \includegraphics{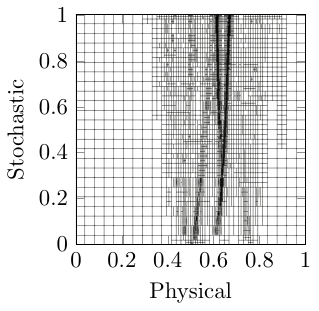}
\end{center}
       }
        \caption{}
        \label{fig:sub:euler_mesh_coarse}
\end{subfigure}
\begin{subfigure}[b]{0.35\linewidth}{
       \begin{center}\includegraphics{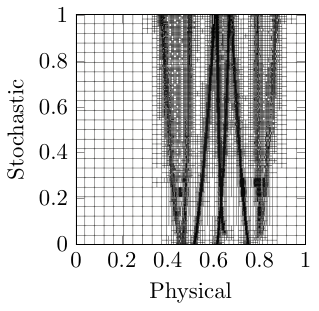}
\end{center}
}
        \caption{}
        \label{fig:sub:euler_mesh_fine}
\end{subfigure}
\caption{Refined discretizations at the terminal time $t=0.1$ for the problem of Section \ref{sec:euler_1D} with $\mathbf{y}_2\sim\mathcal{U}(0,\,1)$. (a) The discretizaton for the second coarsest tolerance pair in Table \ref{tab:tolerances}. (b) The discretization for the finest tolerance in Table \ref{tab:tolerances}.}
        \label{fig:euler_3state_discretizations}
\end{figure}

\begin{figure}[tb]
    \begin{center}
        \hspace{0.5cm}\includegraphics{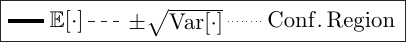}
     \\\end{center}
\centering
\begin{subfigure}[b]{0.32\linewidth}{\includegraphics{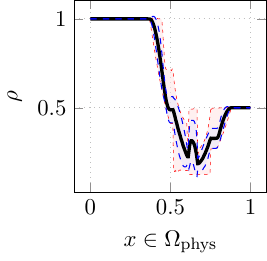}
}
\caption{$\rho \mid \mathbf{y}_2$}
\label{fig:sub:euler_1d_unif_soln}
\end{subfigure}
\hfill
\begin{subfigure}[b]{0.32\linewidth}{\includegraphics{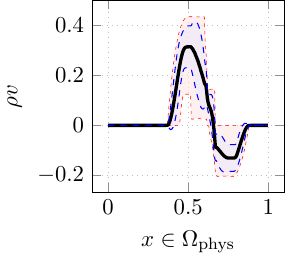}
}
\caption{$\rho v \mid \mathbf{y}_2$}
\label{fig:sub:euler_1d_uni_soln_momentum}
\end{subfigure}
\hfill\begin{subfigure}[b]{0.32\linewidth}{\includegraphics{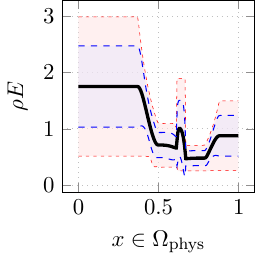}
}
\caption{$\rho E \mid \mathbf{y}_2$}
\label{fig:sub:euler_1d_uni_soln_energy}
\end{subfigure}\\
\caption{Solution characteristics of $\rho, \, \rho v,$ and $\rho E$ at the terminal time $t=0.10$ for the problem of Section \ref{sec:euler_1D} as described by the first stochastic moments. In this instance, the red shaded region denotes the 100\% probability confidence region.}
        \label{fig:euler_1D_solution_plots}
\end{figure}

 Adaptivity proceeds as in the scalar case, with refinement driven by dissatisfaction of the desired local tolerances on each component of $\mathbf{u}$ separately. While large differences in magnitude between the components of the approximate solution $\mathbf{u}$ to \eqref{eq:stochastic_cons_law_param} with flux \eqref{eq:euler_flux} may require setting tolerances relative to each component, we identically employ the tolerances in Table \ref{tab:tolerances} for each component.

 In specifying the initial conditions \eqref{eq:euler_1D_ini_con}, we take $\mathbf{y}$ as $\mathbf{y}_2\sim\mathcal{U}(0,1)$. In this case, the uniformly distributed uncertainty drives full capture of the discontinuous and rarefied flows, posing more challenge than, e.g., random variables following beta or normal distributions that possess low probability flows. As before, different choices of $\mathbf{y}$ will lead to different weighted prioritizations of the approximation quality and probabilities exercised by Algorithm \ref{alg:adaptivity}, yet the essential 
 process remains unchanged.
 
 Considering the solution behavior as before in more detail, we examine in Fig. \ref{fig:euler_1D_solution_plots} the three components of the solution vector as described by the first two statistical moments and a $100\%$ probability confidence region. Note that the first quarter and the last eighth of the domain remains deterministic with respect to the density and momentum due to the terminal simulation time selected. Furthermore, while not invariant to the stochasticity, the relative variation of the density lacks the larger solution diversity exhibited by the momentum and total energy densities. Moreover, for this problem, as demonstrated particularly in Fig. \ref{fig:euler_1D_final_state}, we have discontinuities in the physical \textit{and} stochastic spaces. Yet, the inherent smoothing delivered by the SFV method leads to a corresponding smoothing of the solution profiles in Fig. \ref{fig:euler_1D_solution_plots}, even in regions impacted by colliding flows, such as in the neighborhood of $x=0.6$. The collision of the two flows, however, is not entirely mitigated with this smoothing, as illustrated by the jumps in $\rho$ and $\rho E$.

\begin{figure}[!htb]
    \centering
    \begin{tabular}{cccc}
    \centering
    \rotatebox{90}{\qquad\qquad\quad$\rho$} &
     {\quad\quad\includegraphics{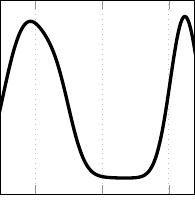}}
    \\
    \rotatebox{90}{\qquad\qquad\quad$\rho v$} &
    {
    \includegraphics{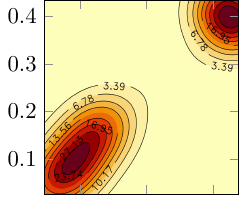}
    }
     &
     {\;\;\includegraphics{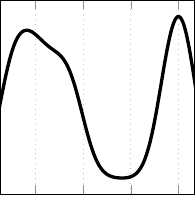}}
    \\
    \rotatebox{90}{\qquad\qquad\quad$\rho E$} &
    {
      \includegraphics{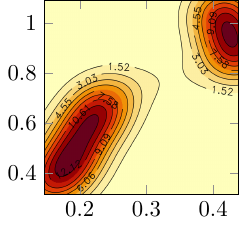}
    } &
    {
      \includegraphics{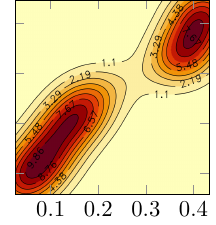}
    } &
     {\;\;\includegraphics{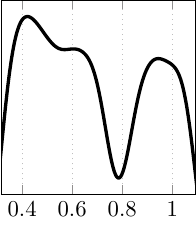}}
    \\
    & \quad$\rho$ & \quad$\rho v$ & \quad$\rho E$
    \end{tabular}
    \caption{Marginal push-forward probability density functions for the Euler problem of Section \ref{sec:euler_1D} computed via the SFV representation. The three components of the solution vector $\mathbf{u} = \left [ \rho,\, \rho v,\, \rho E \right]^\top$ are taken pairwise to compute the associated 1-D and 2-D density functions in the interest of interpretability.}
    \label{fig:euler_1D_marginal_densities}
    \end{figure}
  
 Examining the solution characteristics further, we illustrate in Fig. \ref{fig:euler_1D_marginal_densities} a set of marginal push-forward densities computed via the SFV representation for the solution vector at a point in the physical domain (specifically $x=0.581$) at the terminal time $t=0.1$ that, when considering uncertainty, experiences shocked flows. Note the high concentration of probability to two regions of the solution space in all cases. This property illustrates the effect the shocks propagating in the physical and stochastic spaces induce on the stochastic character of the solution.
 
 \begin{figure}[!htb]%
        \begin{center}
        \hspace{1.5cm}\includegraphics{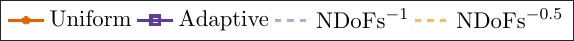}\\
        \end{center}
       \begin{subfigure}[b]{0.29\linewidth}{\begin{center}\includegraphics{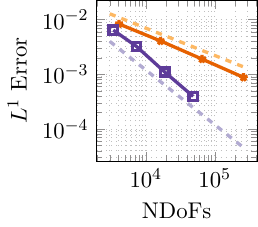}
\end{center}}
\caption{$\mathbb{E}[\rho] \mid \mathbf{y}_2$}
\label{fig:sub:burgers_1d_unif_expectation}
\end{subfigure}
\hfill
       \begin{subfigure}[b]{0.29\linewidth}{\begin{center}\includegraphics{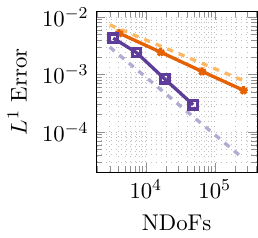}
\end{center}}
        \caption{$\mathbb{E}[\rho v] \mid \mathbf{y}_2$}
        \label{fig:sub:burgers_1d_unif_variance}
\end{subfigure}
\hfill
       \begin{subfigure}[b]{0.29\linewidth}{\begin{center}\includegraphics{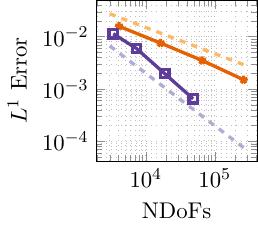}
\end{center}}
\caption{$\mathbb{E}[\rho E] \mid \mathbf{y}_2$}
\label{fig:sub:burgers_1d_unif_pdf}
\end{subfigure}\\
\begin{subfigure}[b]{0.29\linewidth}{\begin{center}\includegraphics{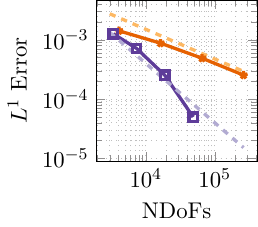}
\end{center}}
\caption{$\mathrm{Var}[\rho] \mid \mathbf{y}_2$}
\end{subfigure}
\hfill
       \begin{subfigure}[b]{0.29\linewidth}{\begin{center}\includegraphics{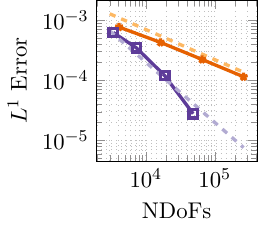}
\end{center}}
        \caption{$\mathrm{Var}[\rho v] \mid \mathbf{y}_2$}
\end{subfigure}
\hfill
       \begin{subfigure}[b]{0.29\linewidth}{\begin{center}\includegraphics{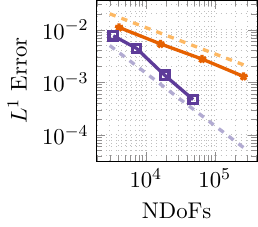}
\end{center}}
\caption{$\mathrm{Var}[\rho E] \mid \mathbf{y}_2$}
\end{subfigure}
\\
\begin{subfigure}[b]{0.29\linewidth}{\begin{center}\includegraphics{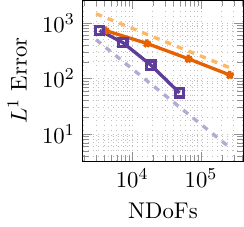}
\end{center}}
\caption{$\mathrm{PDF}[\rho] \mid \mathbf{y}_2$}
\end{subfigure}
\hfill
       \begin{subfigure}[b]{0.29\linewidth}{\begin{center}\includegraphics{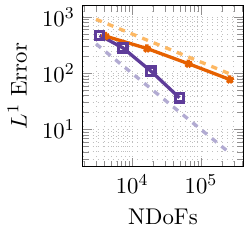}
\end{center}}
        \caption{$\mathrm{PDF}[\rho v] \mid \mathbf{y}_2$}
\end{subfigure}
\hfill
       \begin{subfigure}[b]{0.29\linewidth}{\begin{center}\includegraphics{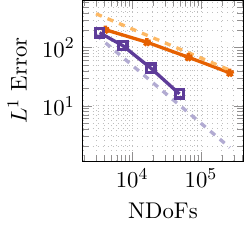}
\end{center}}
\caption{$\mathrm{PDF}[\rho E] \mid \mathbf{y}_2$}
\end{subfigure}
\caption{$L^1$ error of the first two stochastic moments of $\mathbf{u} = \left [ \rho,\, \rho v,\, \rho E \right]^\top$ at time $t=0.10$ for uniform refinements and the proposed adaptivity with $\mathbf{y}_2\sim\mathcal{U}(0,\,1)$. Dashed lines indicate convergence rates with respect to the number of degrees of freedom. Note that these quantities are not normalized.}
        \label{fig:euler_3state_rv_uniform_convergence}
\end{figure}

  Finally, we consider the convergence of the proposed adaptive method against conventional global refinements for each component of the solution vector in Fig. \ref{fig:euler_3state_rv_uniform_convergence}. As in the scalar problem considered in Section \ref{sec:burger_1D}, the proposed adaptive method attains the theoretical optimal linear convergence rate with respect to the number of degrees of freedom (NDoFs) for all quantities, including approximation of the probability densities. Uniform refinements, on the other hand, deliver convergence rates related to $\sqrt{\mathrm{NDoFs}}$. Overall, the economy of DoF allocations permitted by the predictor-corrector scheme, in addition to restoring an optimal convergence rate even in the presence of multiple shocks and rarefied flows, enables an automated framework for studying hyperbolic PDEs under uncertainty with high accuracy.

\section{Conclusions}
\label{sec:conclusions}

We demonstrated an adaptive framework for stochastic hyperbolic PDEs that yields enhanced convergence properties over conventional schemes. Supported by novel theoretical results concerning the approximation properties of valuable computables, in particular push-forward densities, the proposed method supports effective uncertainty quantification for systems governed by hyperbolic PDEs.  Specifically, we provide a predictor-corrector structure for guiding the allocation of degrees of freedom as the stochastic hyperbolic PDE evolves that automates the solution procedure when associated with a desired local, equilibrating error indicator tolerance. Moreover, by leveraging a flux 1-irregularity property, our method supports anisotropic refinements, which tailor to the variation and modeling difficulty in the physical \textit{and} stochastic spaces separately, under a streamlined computational framework. Rather than constrained to inserting DoFs related exponentially to the dimension of the computational space, the support for anisotropic refinements enables more targeted application of computational resources.
\bibliographystyle{siamplain}
\bibliography{references}

\end{document}